\let\csname equation*\endcsname\relax
\let\csname endequation*\endcsname\relax
\DeclareMathAlphabet{\mathpzc}{OT1}{pzc}{m}{it}
\newtheorem{thm}{Theorem}[section]
\newtheorem{lem}[thm]{Lemma}
\newtheorem{prop}[thm]{Proposition}
\theoremstyle{definition}
\newtheorem{defn}[thm]{Definition}
\newtheorem{ex}[thm]{Example}
\newtheorem*{aknow}{Acknowledgments}
\theoremstyle{remark}
\newtheorem{rem}[thm]{Remark}
\newcommand{\spk}{\mathfrak{sp}}
\newcommand{\slk}{\mathfrak{sl}}
\newcommand{\ok}{\mathfrak{o}}
\newcommand{\ospk}{\mathfrak{osp}}
\newcommand{\g}{\mathfrak{g}}
\newcommand{\hk}{\mathfrak{h}}
\newcommand{\sk}{\mathfrak{s}}
\newcommand{\qk}{\mathfrak{q}}
\newcommand{\rk}{\mathfrak{r}}
\newcommand\CC{\mathbb C}
\newcommand\ZZ{\mathbb Z}
\newcommand\PP{\mathbb P}
\newcommand{\Zs}{\mathscr Z}
\newcommand{\ad}{\rm{ad}}
\newcommand{\End}{\rm{End}}
\newcommand{\Der}{\rm{Der}}
\renewcommand\hat\widehat
\renewcommand\tilde\widetilde 
\newcommand{\spa}{\rm{span}}
\newcommand{\OO}{\operatorname{O}}
\newcommand{\Sp}{\operatorname{Sp}}
\newcommand{\zero}{\overline{0}}
\newcommand{\ze}{{\scriptscriptstyle{\overline{0}}}}
\newcommand{\un}{{\scriptscriptstyle{\overline{1}}}}
\newcommand{\gb}{\overline{\g}}
\newcommand{\oplusp}{{ \ \stackrel{\bot}{\oplus} \ }}
\newcommand{\ps}{\PP^1}
\begin{document}
\title[Quadratic and odd quadratic Lie superalgebras in low dimensions]{A classification of quadratic and odd quadratic Lie superalgebras in low dimensions}

\author{Minh Thanh Duong}

\address{Department of Physics, Ho Chi Minh city University of Pedagogy, 280 An Duong Vuong, Ho Chi Minh city, Vietnam.}

\ead{thanhdmi@hcmup.edu.vn}
\begin{abstract}
  In this paper, we give an expansion of two notions of double extension and $T^*$-extension for quadratic and odd quadratic Lie superalgebras. Also, we provide a classification of quadratic and odd quadratic Lie superalgebras
 up to dimension 6. This classification is considered up to isometric isomorphism, mainly in the solvable case and the obtained Lie superalgebras are indecomposable.
\end{abstract}
\noindent{\it Keywords\/}: Quadratic Lie superalgebras, Odd quadratic Lie superalgebras, Double extension, $T^*$-extension, Classification, Low dimension.
\pacs{02.20.Sv, 02.10.Hh}
\maketitle
\normalsize

\section{Introduction}
\hspace{0.8cm}Throughout the paper, all considered vector spaces are finite-dimensional complex vector spaces.

We recall a well-known example in Lie theory as follows. Let $\g$ be a complex Lie algebra and $\g^*$ its dual space. Denote by $\ad: \g\rightarrow\End(\g)$ the adjoint representation and by $\ad^*:\g\rightarrow\End(\g^*)$ the coadjoint representation of $\g$. 
The semidirect product $\gb=\g\oplus\g^*$ of $\g$ and $\g^*$ by the representation $\ad^*$ is a Lie algebra with the bracket given by:
$$
[X+f,Y+g] = [X,Y]+{\ad}^*(X)(g)-{\ad}^*(Y)(f),\ \forall X,Y\in\g,\ f,g\in\g^*.
$$

Remark that $\gb$ is also a {\em quadratic} Lie algebra with invariant symmetric bilinear form $B$ defined by:
$$
B(X+f,Y+g)=f(Y)+g(X),\ \forall X,Y\in\g,\ f,g\in\g^*.
$$

In the way of how to generalize this example, A. Medina and P. Revoy gave the notion of double extension to completely characterize all quadratic Lie algebras \cite{MR85}. Another generalization is called $T^*$-extension given by M. Bordemann that is sufficient to describe solvable quadratic Lie algebras \cite{Bor97}. In this paper, we shall just give an expansion of these two notions for Lie superalgebras. In particular, we present a way to obtain a quadratic Lie superalgebra from a Lie algebra and a symplectic vector space. It is regarded as a rather special case of the notion of generalized double extension in \cite{BBB}. In a slight change of the notion of $T^*$-extension, we give a manner of how to get an odd quadratic Lie superalgebra from a Lie algebra. Odd quadratic Lie superalgebras were studied in \cite{ABB10}. For the definitions and
basic facts of the Theory of Lie superalgebras we refer the reader to \cite{Sch79}.

In Section 1 of this paper, we shortly recall the definition of double extension and give some examples of 2-step double extensions. Although double extensions provide a useful description of quadratic Lie algebras, it is very little to know about them, for instance, the class of 2-step double extensions still remains a lot to be known and its complete classification is difficult to get. A quadratic Lie algebra is called a 2-step double extension if it is a one-dimensional double extension of a 1-step double extension but not a 1-step double extension. The notion of 1-step double extension is for quadratic Lie algebras obtained from one-dimensional double extensions of Abelian Lie algebras and such algebras have just been classified up to isomorphism and isometric isomorphism in \cite{DPU12}. We also introduce the notion of $T^*$-extensions and list all $T^*$-extensions of three-dimensional solvable Lie algebras that provide a complete classification of solvable quadratic Lie algebras of dimension 6. All of them are 1-step double extensions. Section 2 is devoted to quadratic Lie superalgebras where we give an expansion of the double extension and $T^*$-extension notions for the superalgebra case. We also give an exhausted classification of solvable quadratic Lie superalgebras up to dimension 6. In the last section, we provide a way for constructing an odd quadratic Lie superalgebra and show an isometrically isomorphic classification of solvable odd quadratic Lie superalgebras up to dimension 6.
\section{Quadratic Lie algebras}
\begin{defn}Let $\g$ be a Lie algebra. A symmetric bilinear form $B:\ \g\times\g\rightarrow \CC$ is called:
\begin{enumerate}
	\item[(i)] {\em non-degenerate} if $B(X,Y) = 0$ for all $Y\in\g$ implies $X=0$,
	\item[(ii)] {\em invariant} if $B([X,Y],Z) = B(X,[Y,Z])$ for all $X,\ Y,\ Z \in\g$.
\end{enumerate}

A Lie algebra $\g$ is called {\em quadratic} if there exists a bilinear form $B$ on $\g$ such that $B$ is symmetric, non-degenerate and invariant.
\end{defn}

Let $(\g, B)$ be a quadratic Lie algebra. Since $B$ is non-degenerate and invariant, we have some simple properties of $\g$ as follows:
\begin{prop} \label{prop1.2}\hfill
\begin{enumerate}
	\item If $I$ is an ideal of $\g$ then $I^\bot$ is also an ideal of $\g$. Moreover, if $I$ is non-degenerate then so is $I^\bot$ and $\g = I\oplus I^\bot$. Conveniently, in this case we use the notation $\g = I\oplusp I^\bot$.
	\item $\Zs(\g) = [\g,\g]^\bot$ where $\Zs(\g)$ is the center of $\g$. And then $\dim(\Zs(\g)) + \dim([\g,\g]) = \dim(\g).$
\end{enumerate}
\end{prop}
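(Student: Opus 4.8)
The plan is to handle the two parts separately, using only the three defining properties of $B$ (symmetry, invariance, non-degeneracy) together with the standard linear algebra of non-degenerate bilinear forms on a finite-dimensional space.

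For part (1), I first recall that $I^\bot = \{X\in\g : B(X,Y)=0 \ \forall Y\in I\}$ is a subspace. To see it is an ideal I take $X\in I^\bot$ and $Z\in\g$ and check that $[Z,X]\in I^\bot$, i.e. that $B([Z,X],Y)=0$ for every $Y\in I$. Combining skew-symmetry of the bracket with invariance of $B$ gives
\[
B([Z,X],Y) = -B([X,Z],Y) = -B(X,[Z,Y]).
\]
Since $I$ is an ideal we have $[Z,Y]\in I$, and since $X\in I^\bot$ the right-hand side vanishes; hence $[Z,X]\in I^\bot$. The point to get right is the order of operations: one must use skew-symmetry to land the bracket inside $I$ before invoking $X\in I^\bot$, rather than applying invariance directly (which would leave $B(Z,[X,Y])$ with $Z$ arbitrary).

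For the non-degenerate case I invoke that $B$ non-degenerate on $\g$ identifies $\g$ with $\g^*$ via $X\mapsto B(X,\cdot)$ and carries $I^\bot$ onto the annihilator of $I$; this yields the two facts $\dim I+\dim I^\bot=\dim\g$ and $(I^\bot)^\bot=I$. If $I$ is non-degenerate then $I\cap I^\bot=0$, because any vector in the intersection is $B$-orthogonal to all of $I$. The dimension count then forces $\g=I\oplus I^\bot$, and as the summands are mutually $B$-orthogonal this is the orthogonal direct sum $\g=I\oplusp I^\bot$. Finally $I^\bot\cap(I^\bot)^\bot=I^\bot\cap I=0$ shows $B$ restricted to $I^\bot$ is non-degenerate as well.

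For part (2) I characterize the center directly. For $X\in\g$, invariance gives $B(X,[Y,Z])=B([X,Y],Z)$ for all $Y,Z$, so $X\in[\g,\g]^\bot$ if and only if $B([X,Y],Z)=0$ for all $Y,Z\in\g$. Here the non-degeneracy of $B$ lets me pass from ``$B([X,Y],Z)=0$ for all $Z$'' to ``$[X,Y]=0$'', so the condition is equivalent to $[X,Y]=0$ for all $Y$, i.e. to $X\in\Zs(\g)$. This proves $\Zs(\g)=[\g,\g]^\bot$, and applying the perp-dimension formula from part (1) to the ideal $[\g,\g]$ gives $\dim\Zs(\g)+\dim[\g,\g]=\dim\g$. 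The arguments are short; there is no serious obstacle beyond pairing invariance with non-degeneracy in the correct order at the two places indicated, the supporting identities $(I^\bot)^\bot=I$ and $\dim I+\dim I^\bot=\dim\g$ being the usual consequences of non-degeneracy in finite dimension.
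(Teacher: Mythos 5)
Your proof is correct and complete. The paper states this proposition without proof, presenting it as a collection of simple consequences of invariance and non-degeneracy, and your argument is exactly the standard one being relied upon: skew-symmetry before invariance to show $I^\bot$ is an ideal, the identification $\g\simeq\g^*$ to get $\dim I+\dim I^\bot=\dim\g$ and $(I^\bot)^\bot=I$, and the invariance/non-degeneracy pairing to identify $\Zs(\g)$ with $[\g,\g]^\bot$.
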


A quadratic Lie algebra $\g$ is called {\em indecomposable} if $\g = \g_1
  \oplusp \g_2$, with $\g_1$ and $\g_2$ ideals of $\g$, implies $\g_1$
  or $\g_2 = \{0\}$. Otherwise, we call $\g$ {\em decomposable}.
	
We say that two quadratic Lie algebras $(\g,B)$ and $(\g',B')$ are isometrically isomorphic (or
  i-isomorphic, for short) if there exists a Lie algebra isomorphism $A$
  from $\g$ onto $\g'$ satisfying $B'(A(X), A(Y)) = B(X,Y)$ for all $X,\ Y \in \g$.
  In this case, $A$ is called an i-isomorphism. 
\subsection{Double extensions}
\begin{defn}\label{defn1.2}
Let $(\g,B)$ be a quadratic Lie algebra and $D$ a derivation of $\g$. We say $D$ a {\em skew-symmetric} derivation of $\g$ if it satisfies $B(D(X),Y) = -B(X,D(Y))$ for all $X,\ Y\in\g$.
\end{defn}

Denote by $\Der_a(\g,B)$ the vector space of skew-symmetric derivations of $(\g,B)$ then $\Der_a(\g,B)$ is a subalgebra of $\Der(\g)$, the Lie algebra of derivations of $\g$. The notion of double extension is defined as follows (see\cite{MR85}).
\begin{defn}\label{defn1.3}
Let $\g$ be a Lie algebra, $\g^*$ its dual space and $(\hk,B)$ a quadratic Lie algebra. Let $\psi:\g\rightarrow\Der_a(\hk,B)$ be a Lie algebra endomorphism. Denote by $\phi:\hk\times\hk\rightarrow \g^*$ the linear mapping defined by $\phi(X,Y)Z = B(\psi(Z)(X),Y)$ for all $X,\ Y\in\hk,\ Z\in\g$. Consider the vector space $\overline{\hk}=\g\oplus\hk\oplus\g^*$ and define a product on $\overline{\hk}$ by:
\begin{eqnarray*}
  [X+F+f,Y+G+g]_{\hk} = &[X,Y]_\g + [F,G]_{\hk} + \ad^*(X)(g)-\ad^*(Y)(f)\\
  & +\psi(X)(G)-\psi(Y)(F)+\phi(F,G) 
\end{eqnarray*}
for all $X,Y\in\g$, $f,g\in\g^*$ and $F,G\in\hk$. Then $\overline{\hk}$ becomes a quadratic Lie algebra with the bilinear form $\overline{B}$ given by $\overline{B}(X+F+f,Y+G+g) = f(Y)+g(X)+B(F,G)
$ 
for all $X,Y\in\g$, $f,g\in\g^*$ and $F,G\in\hk$. The Lie algebra $(\overline{\hk},\overline{B})$ is called the {\em double extension of $(\hk,B)$ by $\g$ by means of $\psi$}.
\end{defn}

Note that when $\hk=\{0\}$ then this definition is reduced to the notion of the semidirect product of $\g$ and $\g^*$ by the coadjoint representation.

\begin{prop}\cite{MR85} \label{2.3}\hfill

Let $(\g, B)$ be an indecomposable quadratic Lie algebra such that it is not simple nor one-dimensional. Then $\g$ is a double extension of a quadratic Lie algebra by a simple or one-dimensional algebra.
\end{prop}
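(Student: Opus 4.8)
The plan is to produce a minimal ideal of $\g$, use $B$ to pin down its position, and then recognise the whole algebra as the double extension attached to it. First I would pick a minimal nonzero ideal $I$ of $\g$; such an ideal exists and is proper because $\g$ is neither simple nor one-dimensional, so it has a proper nonzero ideal, inside which a minimal one sits. By Proposition~\ref{prop1.2}(1), $I^\bot$ is an ideal, so $I\cap I^\bot$ is an ideal contained in $I$, and minimality forces it to be $\{0\}$ or $I$. If it were $\{0\}$, then $B|_I$ would be non-degenerate and Proposition~\ref{prop1.2}(1) would give $\g = I\oplusp I^\bot$ with both summands nonzero, contradicting indecomposability. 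Hence $I\cap I^\bot = I$, i.e.\ $I\subseteq I^\bot$ is totally isotropic.

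The key structural observation I would then establish is that $I^\bot$ centralises $I$. Indeed, for $x\in I^\bot$ and $y\in I$, invariance gives $B([x,y],w) = B(x,[y,w])$ for every $w\in\g$; since $I$ is an ideal, $[y,w]\in I$, so $B(x,[y,w])=0$ because $x\in I^\bot$, and non-degeneracy forces $[x,y]=0$. In particular $[I,I]\subseteq[I^\bot,I]=0$, so $I$ is abelian. Setting $\dk := \g/I^\bot$, the form induces a non-degenerate pairing $I\times\dk\to\CC$ (because $I^\bot$ is exactly the annihilator of $I$), giving a vector space isomorphism $I\cong\dk^*$; and since $I^\bot$ acts trivially on $I$, the adjoint action of $\g$ on $I$ descends to a $\dk$-action under which this isomorphism identifies $I$ with the coadjoint module $\dk^*$, a direct check from invariance of $B$. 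Now minimality of $I$ means precisely that $I$ has no proper nonzero $\g$-submodule, hence no proper nonzero $\dk$-submodule, so $\dk^*\cong I$ is an irreducible $\dk$-module. By the annihilator duality between submodules of $\dk^*$ and ideals of $\dk$, this says $\dk$ has no proper nonzero ideal, whence $\dk$ is either simple or one-dimensional.

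It remains to realise $\g$ as the double extension of the quadratic Lie algebra $\hk := I^\bot/I$ by $\dk$. Here $\hk$ is quadratic because the radical of $B|_{I^\bot}$ equals $I^\bot\cap I = I$, so $B$ descends to a non-degenerate invariant form on $I^\bot/I$. To match Definition~\ref{defn1.3} I would invoke the hyperbolic structure of $B$: choose an isotropic complement to $I^\bot$ in $\g$ mapping isomorphically onto $\dk$ and a complement to $I$ inside $I^\bot$ projecting isomorphically onto $\hk$, writing $\g = \dk\oplus\hk\oplus I$ with $I\cong\dk^*$. The skew-symmetric derivation $\psi(\bar x)$ of $(\hk,B)$ is then the map induced by $\mathrm{ad}(x)$ on $I^\bot/I$, and $\phi$ together with the remaining bracket components is read off from the brackets of $\g$ in this splitting. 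The hard part will be exactly this realisation step: one must verify that, for the chosen splitting, the bracket and form of $\g$ coincide with those of Definition~\ref{defn1.3}, i.e.\ that $\psi$ lands in $\Der_a(\hk,B)$, that $\phi(F,G)Z = B(\psi(Z)F,G)$, and that no extraneous components survive, a bookkeeping computation controlled by the isotropy of $I$ and of the chosen section. By contrast, indecomposability enters only once, to rule out the non-degenerate case in the first step.
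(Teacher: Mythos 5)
The paper offers no proof of this proposition (it is quoted from \cite{MR85}), so your attempt can only be judged on its own terms. Your first two paragraphs are correct and reproduce the standard Medina--Revoy argument: the minimal ideal $I$ is totally isotropic by indecomposability, $[I^\bot,I]=0$, the pairing identifies $I$ with the coadjoint module of $\dk=\g/I^\bot$, and minimality makes this module irreducible, forcing $\dk$ to be simple or one-dimensional.

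The gap is the realisation step, which you present as bookkeeping. In Definition~\ref{defn1.3} the extending algebra is a \emph{subalgebra} of the double extension: for $X,Y$ in that factor, the bracket is $[X,Y]_\g$ with no component in $\hk\oplus\g^*$. An arbitrary isotropic complement $W$ of $I^\bot$ has no reason to satisfy $[W,W]\subseteq W$; in general $[w_1,w_2]$ acquires components in the middle factor and in $I$, and these ``extraneous components'' cannot be removed by bookkeeping --- they are obstructed by the Lie structure, not by the form. When $\dk$ is one-dimensional the condition is vacuous (since $[e,e]=0$ for a spanning vector $e$) and your outline closes. When $\dk$ is simple, you need the Levi--Malcev theorem to produce a complement that is a subalgebra --- a genuinely missing ingredient in your sketch. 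Even then, isotropy is a further obstruction: $B$ restricted to a Levi section $\sk\cong\dk$ is an invariant form on a simple algebra, hence a multiple $\lambda\kappa$ of the Killing form, and $\lambda\neq 0$ does occur for indecomposable $\g$. Concretely, take $\g=\slk(2)\ltimes\slk(2)^*$ (coadjoint semidirect product) with $B(X+f,Y+g)=f(Y)+g(X)+\kappa(X,Y)$: this form is invariant and non-degenerate, $\g$ is indecomposable (its unique proper nonzero ideal $\slk(2)^*$ is totally isotropic) and has trivial centre (so it is not a one-dimensional double extension), yet every Levi subalgebra $\exp({\ad}\,n)(\slk(2))$, $n\in\slk(2)^*$, satisfies $B|_{\sk\times\sk}=\kappa\neq 0$, so no isotropic subalgebra complement exists. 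This is precisely why the double extension of \cite{MR85} carries an additional invariant symmetric form on the extending algebra, a datum suppressed in Definition~\ref{defn1.3}; without it the simple case of your plan cannot be completed, and your closing claim that indecomposability is used ``only once'' marks exactly the place where the argument fails.
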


Sometimes, we use a particular case of the notion of double extension: a double extension by a skew-symmetric derivation. It is explicitly defined as follows.

\begin{defn} 
Let $(\g,B)$ be a quadratic Lie algebra and $C\in \Der_a(\g)$. On the vector space $\bar{\g}=\g\oplus \CC e\oplus \CC f$ we define the product 
 $[X,Y]_{\bar{\g}} = [X,Y]_{\g} +B(C(X),Y)f$, $[e,X]=C(X)$ and $[f,\bar{\g}]=0$ for all $X,\ Y\in\g$. Then $\bar{\g}$ is a quadratic Lie algebra with an invariant bilinear form $\bar{B}$ defined by:
$$\bar{B}(e,e)=\bar{B}(f,f)=\bar{B}(e,\g)=\bar{B}(f,\g)=0, \ \bar{B}(X,Y)=B(X,Y),\ \bar{B}(e,f)=1
$$
for all $X,\ Y\in\g$. In this case, we call $\bar{\g}$ the {\em double extension of $\g$ by $C$} or a {\em one-dimensional double extension}, for short.
\end{defn}

A remarkable result is that one-dimensional double extensions are sufficient for studying solvable quadratic Lie algebras (see \cite{Kac85} or \cite{FS87}).
\begin{ex}\label{ex1.8} Let $\g_4$ be the diamond Lie algebra spanned by $\{X,P,Q,Z\}$ where the Lie bracket is defined by: $[X,P]=P$, $[X,Q]=-Q$ and $[P,Q]=Z$ then $\g$ is quadratic with invariant bilinear form $B$ given by $B(X,Z)=B(P,Q)=1$, the other are vanish.   Assume that $D$ is a skew-symmetric derivation of $\g_4$. By a straightforward computation, the matrix of $D$ in the given basis is:
$$ D = \begin{pmatrix} 0 & 0 & 0 & 0 \\ y & x & 0 & 0 \\ z &  0 & -x & 0\\ 0 & -z & -y & 0\end{pmatrix}$$ 
where $x,\ y,\ z\in\CC$. Let $\bar{\g}_4=\g_4\oplus \CC e\oplus \CC f$ be the double extension of $\g_4$ by $D$. Then the Lie bracket is defined on $\bar{\g}_4$ as follows $[e,X]=yP+zQ$, $[e,P] = xP-zZ$, $[e,Q]=-xQ-yZ$, $[X,P]=P+zf$, $[X,Q]=-Q+yf$ and $[P,Q]=Z+xf$.
\end{ex}

In the above example, the quadratic Lie $\bar{\g}_4$ is decomposable since the elements $u=-e+xX-yP+zQ$ and $f$ are central and $B(u,f)=-1$. Note that the skew-symmetric derivation $D$ above is inner. Really, in \cite{FS96}, the authors proved a general result that any double extension by an inner derivation is decomposable. Here we give a short proof as follows.

\begin{prop}\label{prop1.9}
Let $(\g,B)$ be a quadratic Lie algebra and $C=\ad_{\g}(X_0)$ be an inner derivation of $\g$. Then the double extension $\gb$ of $\g$ by $C$ is decomposable.
\end{prop}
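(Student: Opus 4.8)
The plan is to exhibit an explicit two-dimensional non-degenerate central ideal of $\gb$ and then split it off by Proposition \ref{prop1.2}(1). Guided by Example \ref{ex1.8}, where the central element turned out to be $-e+X_0$ with $X_0$ the generator of the inner derivation, I would set $u = X_0 - e \in \gb$ and argue that $\spa\{u,f\}$ is the ideal we want.

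First I would check that $u$ is central. For $X\in\g$, the defining brackets give $[e,X]=C(X)=[X_0,X]_\g$ and $[X_0,X]_{\gb}=[X_0,X]_\g + B(C(X_0),X)f$, so the two copies of $[X_0,X]_\g$ cancel and one is left with $[u,X]=[X_0,X]_{\gb}-[e,X]=B(C(X_0),X)f$. The decisive observation is that $C(X_0)=\ad_{\g}(X_0)(X_0)=[X_0,X_0]_\g=0$, whence $[u,X]=0$ for every $X\in\g$. Since $[e,e]=0$ and $[f,\gb]=0$, one likewise obtains $[u,e]=0$ and $[u,f]=0$; hence $u\in\Zs(\gb)$. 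As $f$ is central by the very construction of the double extension, $I:=\spa\{u,f\}$ is a two-dimensional abelian ideal of $\gb$.

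Next I would verify that $I$ is non-degenerate. Reading off the formula for $\bar B$ one finds $\bar B(u,f)=\bar B(X_0-e,f)=-\bar B(e,f)=-1$, together with $\bar B(f,f)=0$ and $\bar B(u,u)=B(X_0,X_0)$, so the Gram matrix of $\{u,f\}$ is $\bigl(\begin{smallmatrix} B(X_0,X_0) & -1 \\ -1 & 0\end{smallmatrix}\bigr)$, of determinant $-1\neq 0$. Thus $I$ is a non-degenerate ideal, and Proposition \ref{prop1.2}(1) yields $\gb = I \oplusp I^\bot$ with $I$ and $I^\bot$ both ideals. Provided $\g\neq\{0\}$ we have $\dim I^\bot=\dim\g\ge 1$, so both summands are nonzero and $\gb$ is decomposable (the degenerate case $\g=\{0\}$ being immediate).

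I do not anticipate a genuine obstacle here: the only real content is guessing the correct central vector $u=X_0-e$, after which the entire argument rests on the single identity $C(X_0)=[X_0,X_0]_\g=0$, which is precisely what kills the potentially obstructing $f$-component of $[u,X]$. Everything else is routine bilinear-form bookkeeping, so the proof should be short.
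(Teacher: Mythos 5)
Your proof is correct and follows essentially the same route as the paper: the paper also exhibits the central element $e-X_0$ (your $u$ is just its negative), pairs it with the central element $f$ to get a non-degenerate ideal, and splits it off via Proposition \ref{prop1.2}. Your version merely fills in the details the paper labels ``straightforward,'' namely the computation $[u,X]=B(C(X_0),X)f=0$ and the Gram matrix check.
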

\begin{proof}
If $\g=\{0\}$ then the result is obvious. Assume $\g\neq\{0\}$. It is straightforward to prove that the element $e-X_0$ is central. Moreover, $B(e-X_0,f)=1$ and $f$ is central so that $\gb$ is decomposable by Proposition \ref{prop1.2}.
\end{proof}
\begin{rem}\label{rem2.1}
As we known, the diamond Lie algebra is a one-dimensional double extension of an Abelian Lie algebra. We call this type of double extensions a {\em 1-step double extension}. The above example shows that a double extension of a 1-step double extension may be a 1-step double extension. Therefore, we define a {\em 2-step double extension} if it is a one-dimensional double extension of a 1-step double extension but not a 1-step double extension. Since a fact is that the 1-step double extensions are exhaustively classified in \cite{DPU12}, it remains k-step double extensions with $k>1$. Note that every solvable quadratic Lie algebra up to dimension 6 is a 1-step double extension \cite{DLP12}.
\end{rem}

Let $\g_5$ be a non-Abelian nilpotent quadratic Lie algebra of dimension 5 spanned by $\{X_1,X_2,T,Z_1,Z_2\}$ with the Lie bracket defined by $[X_1,X_2]=T$, $[X_1,T]=-Z_2$, $[X_2,T]=Z_1$ and the bilinear form $B$ given by $B(X_i,Z_i)=B(T,T)=1$, $i=1,2$, zero otherwise. This algebra is a 1-step double extension. 
%
We can check that $D$ is a skew-symmetric derivation of $\g_5$ if and only if the matrix of $D$ in the given basis is:
$$D = \begin{pmatrix} -x & -z & 0 & 0 & 0 \\ -y & x & 0 & 0 & 0 \\ -b & -c & 0 & 0 & 0 \\ 0 & -t & b & x & y \\ t & 0 & c & z & -x \end{pmatrix}$$
where $x,\ y,\ z,\ t,\ b,\ c\in\CC$. Combined with Lemma 5.1 in \cite{Med85}: two double extensions by  skew-symmetric derivations $D$ and $D'$ respectively are i-isomorphic if $D$ and $D'$ are different by adding an inner derivation, we consider only skew-symmetric derivations having matrix:
$$ D = \begin{pmatrix} -x & -z & 0 & 0 & 0 \\ -y & x & 0 & 0 & 0 \\ 0 & 0 & 0 & 0 & 0 \\ 0 & 0 & 0 & x & y \\ 0 & 0 & 0 & z & -x \end{pmatrix}.$$
Let $\bar{\g}_5$ be the double extension of $\g_5$ by $D$ then we have the following assertion.

\begin{prop} 
If $x=y=0$ or $x=z=0$ then $\bar{\g}_5$ is a 1-step double extension. Else, $\bar{\g}_5$ is a 2-step double extension and indecomposable.
\end{prop}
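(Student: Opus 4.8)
My plan rests on two invariants of the quadratic Lie algebra $(\bar{\g}_5,\bar B)$: its centre $\Zs(\bar{\g}_5)$ and its fundamental $3$-form $I\in\Wedge^3\bar{\g}_5^*$, $I(A,B,C)=\bar B([A,B],C)$. First I would record the brackets of $\bar{\g}_5$ and compute both. A direct calculation shows that $\Zs(\bar{\g}_5)$ always contains $f$ and contains a further vector in $\spa\{Z_1,Z_2\}$ precisely when the block $\left(\begin{smallmatrix} x&y\\ z&-x\end{smallmatrix}\right)$ of $D$ on $\spa\{Z_1,Z_2\}$ is singular, i.e. when $x^2+yz=0$; the two listed families $x=y=0$ and $x=z=0$ both lie on this locus and, up to the normalisation already performed on $D$, represent it. Equivalently $\bar{\g}_5$ is nilpotent in these cases and solvable non-nilpotent otherwise, since the eigenvalues of $\ad(e)$ on $\spa\{X_1,X_2\}$ and on $\spa\{Z_1,Z_2\}$ are $\pm\sqrt{x^2+yz}$. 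In the same computation one finds $I=X_1^*\wedge X_2^*\wedge T^*+e^*\wedge\omega$, with $\omega=-x\,X_1^*\wedge Z_1^*-y\,X_1^*\wedge Z_2^*-z\,X_2^*\wedge Z_1^*+x\,X_2^*\wedge Z_2^*$.

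For the first assertion I would exhibit $\bar{\g}_5$ directly as a one-dimensional double extension of an Abelian Lie algebra (a quadratic vector space), i.e. as a $1$-step extension in the sense of Remark \ref{rem2.1} and \cite{DPU12}. I would take the isotropic central vector lying in $\spa\{Z_1,Z_2\}$ produced above as the new central generator $f'$, choose a suitable generator $e'\in\spa\{X_1,X_2\}$ with $\bar B(e',f')=1$, and set $V=(\CC e'\oplus\CC f')^{\perp}$. Then I would check that $V$ is a non-degenerate $5$-dimensional subspace with $[V,V]\subseteq\CC f'$ and $[e',V]\subseteq V$, so that $C:=\ad(e')|_V$ is a well-defined endomorphism of $V$; it is automatically skew-symmetric for $\bar B|_V$, since $\ad$ of any element is skew for an invariant form. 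This realises $\bar{\g}_5$ as the double extension of $(V,\bar B|_V)$ by $C$, hence as a $1$-step double extension.

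For the second assertion, when $x^2+yz\neq0$ the centre is the single isotropic line $\CC f$, and indecomposability follows cleanly: if $\bar{\g}_5=\ak\oplusp\bk$ nontrivially, each factor is a nonzero solvable quadratic Lie algebra, so $[\ak,\ak]\neq\ak$ and Proposition \ref{prop1.2} gives $\Zs(\ak)=[\ak,\ak]^{\perp}\neq\{0\}$, and likewise for $\bk$; thus $\dim\Zs(\bar{\g}_5)\geq2$, a contradiction. To see that $\bar{\g}_5$ is genuinely $2$-step (not itself a $1$-step double extension) I would use that a one-dimensional double extension of an Abelian algebra has $3$-form of the shape $\xi\wedge\omega'$, $\xi$ the metric dual of its central generator; so it suffices to show $I$ is divisible by no $1$-form. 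Solving $\xi\wedge I=0$ forces $\xi\in\spa\{X_1^*,X_2^*,T^*\}$ and then reduces to the linear system governed by $\left(\begin{smallmatrix} x&y\\ z&-x\end{smallmatrix}\right)$, whose only solution is $\xi=0$ exactly when $x^2+yz\neq0$. Hence $\bar{\g}_5$ is not a double extension of an Abelian algebra, and being by construction a one-dimensional double extension of the $1$-step algebra $\g_5$, it is a $2$-step double extension.

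The main obstacle is the explicit $1$-step construction of the second paragraph: identifying the non-degenerate Abelian complement $V$ and verifying both $[V,V]\subseteq\CC f'$ and the invariance $\ad(e')V\subseteq V$ in each normalised model. A secondary, delicate point is the bookkeeping that reconciles the clean invariant dichotomy $x^2+yz=0$ versus $x^2+yz\neq0$ with the two families $x=y=0$ and $x=z=0$ appearing in the statement, which requires checking that the further i-automorphisms of $\g_5$ carry the whole singular-block locus onto these representatives.
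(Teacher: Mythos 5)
Your proposal is correct, and on the decisive second half it takes a genuinely different---and in fact sounder---route than the paper. For the $1$-step cases your reconstruction coincides with the paper's: for $x=z=0$, $y\neq 0$ the paper takes exactly $f'=Z_1$, $e'=X_1$ and $V=\spa\{e,X_2,T,f,Z_2\}$; note that the inclusion $[e',V]\subseteq V$ you list as an obstacle is automatic, since $\bar{B}([e',A],f')=\bar{B}(e',[A,f'])=0$ and $\bar{B}([e',A],e')=0$ follow from invariance and centrality of $f'$. For the ``else'' cases the paper argues differently from you: it computes the second derived ideal and asserts that $\dim[[\bar{\g}_5,\bar{\g}_5],[\bar{\g}_5,\bar{\g}_5]]>1$ in all remaining cases (a $1$-step double extension has second derived ideal contained in the line $\CC f$), and it obtains indecomposability by citing the classification in \cite{DLP12} (a decomposition would split $\bar{\g}_5$ into solvable pieces of dimension $\leq 6$, forcing it to be $1$-step). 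Your two substitutes---the divisibility test on the $3$-form $I$, i.e.\ the criterion of \cite{DPU12}, and the centre-counting argument for indecomposability using $\Zs(\bar{\g}_5)=\CC f$---are both correct and more self-contained.

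The important point is that your ``secondary, delicate point'' is not mere bookkeeping: it is precisely where the paper's own proof breaks down, and your invariant $x^2+yz$ repairs it. On the locus $x\neq 0$, $x^2+yz=0$ (for instance $x=1$, $y=1$, $z=-1$), which lies in the proposition's ``else'' branch, one finds $[\bar{\g}_5,\bar{\g}_5]=\spa\{X_1+X_2,T,Z_1,Z_2,f\}$ and $[[\bar{\g}_5,\bar{\g}_5],[\bar{\g}_5,\bar{\g}_5]]=\CC(Z_1-Z_2)$, so the paper's claim that all remaining cases have second derived ideal of dimension $>1$ is false there. In fact $\bar{\g}_5$ is then a $1$-step double extension: $f'=Z_1-Z_2$ is central and isotropic, $e'=X_1$ satisfies $\bar{B}(e',f')=1$, and $V=\spa\{e,f,T,X_1+X_2,Z_2\}$ satisfies $[V,V]\subseteq\CC f'$. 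Equivalently, an isometric automorphism of $\g_5$ realising $\operatorname{SL}(2,\CC)$-conjugation of the nilpotent block $\left(\begin{smallmatrix} x&y\\ z&-x\end{smallmatrix}\right)$ carries this case onto the family $x=y=0$; this reduction needs strictly more than the inner-derivation normalisation already performed, so the proposition as literally stated is wrong on this locus. The correct dichotomy is the one you identify, $x^2+yz=0$ (nilpotent block, $1$-step) versus $x^2+yz\neq 0$ ($2$-step and indecomposable); your argument proves exactly this corrected statement, whereas the paper's proof of the statement as written has a genuine gap.
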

\begin{proof}
If $x=y=z=0$ then $\bar{\g}_5$ is decomposable so it is obviously a 1-step double extension. We assume that $x=z=0$ and $y\neq 0$. In this case, the Lie bracket on $\bar{\g}_5$ is defined by:
$$
[e_,X_1]=-yX_2,\ [e_,Z_2]=yZ_1, \ [X_1,X_2]=T,\ [X_1,T]=-Z_2 \ \text{and}\ [X_2,T]=Z_1,
$$
and $[X_1,Z_2] = -yf$. Then one has $\bar{\g}_5=\qk\oplus\left( \CC X_1\oplus\CC Z_1\right)$ a 1-step double extension of $\qk$ spanned by $\{e,X_2,T,f,Z_2\}$ by the skew-symmetric $C:\qk\rightarrow\qk$, $C(e)=yX_2$, $C(X_2)=T$, $C(T)=-Z_2$ and $C(Z_2)=-yf$. Note that the case of $x=y=0$ and $z\neq 0$ is completely similar to the case of $x=z=0$ and $y\neq 0$.

If $y=z=0$ and $x\neq 0$ then 
one has $\left[\left[\bar{\g}_5,\bar{\g}_5\right],\left[\bar{\g}_5,\bar{\g}_5\right]\right]={\spa}\{T,Z_1,Z_2,f\}$ so $\bar{\g}_5$ is not a 1-step double extension. Remark that $\bar{\g}_5$ is also indecomposable because if there is the contrary then $\bar{\g}_5$ must be a 1-step double extension since all solvable quadratic Lie algebra up to dimension 6 are 1-step double extensions.

The remaining cases satisfy the condition $\dim\left([[\bar{\g}_5,\bar{\g}_5],[\bar{\g}_5,\bar{\g}_5]]\right)> 1$. Hence, they are all indecomposable 2-step double extensions and the result follows.
\end{proof}

Let $(\g_{2n+2},B)$ be a quadratic Lie algebra of dimension $2n+2$, $n\geq 1$, spanned by $\{X_0,...,X_n,Y_0,...,Y_n\}$ where the Lie bracket is defined by $[Y_0,X_i]=X_i$, $[Y_0,Y_i]=-Y_i$ and $[X_i,Y_i]=X_0$, $1\leq i\leq n$, the other are trivial, and the invariant bilinear form $B$ is given by $B(X_i,Y_i)=1$, $0\leq i\leq n$, zero otherwise. If $n=1$, the Lie algebra $\g_{2n+2}$ is reduced to the diamond Lie algebra. Note that $\g_{2n+2}$ is a 1-step double extension.

Let $D$ be a skew-symmetric derivation of $\g_{2n+2}$. By $\Zs(\g_{2n+2})$ and $[\g_{2n+2},\g_{2n+2}]$ stable by $D$, we can assume $D(X_0)=aX_0$, $ D(X_i)=\sum_{j=1}^n a_{ij}X_j + \sum_{j=1}^n b_{ij}Y_j +c_i X_0$ and $D(Y_i)=\sum_{j=1}^n a'_{ij}X_j + \sum_{j=1}^n b'_{ij}Y_j +c'_i X_0
$
 where the coefficients are complex numbers. Moreover, since $B(D(Y_0),X_0) = -B(Y_0,D(X_0))=a$ and $B(D(Y_0),Y_0)=0$, we can also write:
$$
D(Y_0)=-aY_0+\sum_{i=1}^n \alpha_i X_i+ \sum_{i=1}^n \beta_i Y_i.
$$

It is easy to prove that $a=0$, $b_{ij}=0$, $c_i= - \beta_i$, $a'_{ij}=0$, $c'_i=-\alpha_i$ and $a_{ij}=-b'_{ji}$. 
Therefore, we rewrite $D$:
$$
D(Y_0)=\sum_{i=1}^n \alpha_i X_i+ \sum_{i=1}^n \beta_i Y_i,\ D(X_i)=\sum_{j=1}^n a_{ij}X_j -\beta_i X_0,\
D(Y_i)=-\sum_{j=1}^n a_{ji}Y_j -\alpha_i X_0
$$
and $D(X_0)=0$.

On the contrary, we can verify that if $D$ is defined as above then $D$ is a skew-symmetric derivation of $\g_{2n+2}$.

Now we assume $n\geq 2$ and let $D$ be a particular derivation defined by $D(X_1)=X_1$, $D(Y_1)=-Y_1$ and the other are vanish. It is easy to see that $D$ can not be an inner derivation and then the double extension $\gb$ of $\g_{2n+2}$ by $D$ has the Lie bracket: $[e,X_1]=X_1$, $[e,Y_1]=-Y_1$, $[Y_0,X_i] = X_i$, $[Y_0,Y_i] = -Y_i$, $[X_1,Y_1]=X_0 + f$ and $[X_i,Y_i] = X_0$, $2\leq i\leq n$. It implies that $\left[[\gb,\gb],[\gb,\gb] \right]={\spa}\{X_0,X_0+f \}$ so that $\gb$ is a 2-step double extension.

\begin{rem} It is obvious that $\g_1\oplusp\g_2$ with $\g_1$ and $\g_2$ non-Abelian 1-step double extensions is a 2-step double extension. However, this case is rather trivial since it is decomposable. For instance, the Lie algebra $\gb$ defined as above is decomposable by $\gb = \g_4\oplusp \g_{2n}$. More explicitly, $\gb={\spa}\{e,X_1,Y_1,X_0+f\}\ \oplusp\ {\spa}\{e-Y_0,X_i,Y_i,X_0\},\ \ 2\leq i\leq n.$
\end{rem}

\subsection{$T^*$-extensions}
\begin{defn}\label{defn1.4}\cite{Bor97}\hfill

Let $\g$ be a Lie algebra and $\theta:\g\times\g\rightarrow\g^*$ a 2-cocycle of $\g$, that is a skew-symmetric bilinear map satisfying:
$$
\theta(X,Y)\circ{\ad}(Z)+\theta([X,Y],Z)+\ cycle(X,Y,Z)\ =0.
$$
 for all $X,Y,Z\in\g$. Define on the vector space $T^*_{\theta}(\g):=\g\oplus\g^*$ the product:
$$ [X+f,Y+g]=[X,Y]+{\ad}^*(X)(g)-{\ad}^*(Y)(f) +\theta(X,Y)
$$
for all $X,Y\in\g$, $f,g\in\g^*$ then $T^*_{\theta}(\g)$ becomes a Lie algebra and it is called the {\em $T^*$-extension of $\g$ by means of $\theta$}. In addition, if $\theta$ satisfies the {\em cyclic} condition, i.e. $\theta(X,Y)Z=\theta(Y,Z)X$ for all $X,Y,Z\in\g$ then $T^*_{\theta}(\g)$ is quadratic with the bilinear form:
$$
B(X+f,Y+g)=f(Y)+g(X),\ \forall X,Y\in\g,\ f,g\in\g^*.
$$
\end{defn}
%
\begin{prop}\label{2.1.13}\cite{Bor97}\hfill

Let $(\g, B)$ be an even-dimensional quadratic Lie algebra over $\CC$. If $\g$ is solvable then $\g$ is i-isomorphic to a $T^*$-extension $T_{\theta}^*(\hk)$ of $\hk$ where $\hk$ is the quotient algebra of $\g$ by a totally isotropic ideal.
\end{prop}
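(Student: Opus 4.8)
The plan is to reduce everything to the existence of a totally isotropic ideal $I$ of dimension $\tfrac12\dim\g$ (a Lagrangian ideal); once such an $I$ is produced, the $T^*$-extension structure is essentially formal. First I would note that such an $I$ is automatically abelian: for $X,Y\in I$ and arbitrary $Z\in\g$, invariance gives $B([X,Y],Z)=B(X,[Y,Z])$, and since $I$ is an ideal $[Y,Z]\in I$, so $B(X,[Y,Z])=0$ because $I$ is isotropic; as $Z$ is arbitrary and $B$ is non-degenerate, $[X,Y]=0$. Then set $\hk:=\g/I$. Because $I$ is Lagrangian we have $I^\bot=I$, so $B$ descends to a non-degenerate pairing $\hk\times I\to\CC$, identifying $I\cong\hk^*$; since $I$ is an abelian ideal the $\g$-action on $I$ factors through $\hk$, and a short computation with invariance shows that under $I\cong\hk^*$ this action is exactly the coadjoint action $\ad^*$ of $\hk$. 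Choosing a complementary Lagrangian subspace $W$ (which exists over $\CC$) and the induced linear section $s:\hk\to\g$ with image $W$, I would define $\theta(X,Y)\in I\cong\hk^*$ to be the $I$-component of $[s(X),s(Y)]$. The Jacobi identity in $\g$ forces $\theta$ to be a $2$-cocycle, invariance of $B$ forces the cyclic condition $\theta(X,Y)Z=\theta(Y,Z)X$, and the fact that $W$ is isotropic makes $\overline B$ match the standard form of $T^*_\theta(\hk)$; the resulting identification is the desired i-isomorphism.

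The real content is thus the existence of a Lagrangian ideal, which I would obtain by enlarging a maximal one. Let $I$ be an isotropic ideal of maximal dimension (possibly $\{0\}$), and suppose for contradiction that $\dim I<\tfrac12\dim\g$. Then $\qk:=I^\bot/I$ is a quadratic Lie algebra of dimension $\ge2$; since $I$ and $I^\bot$ are ideals, $\g$ acts on $\qk$, this action is by infinitesimal isometries of the induced form (each $\ad(g)$ is skew-symmetric for $B$, as $B([g,u],w)+B(u,[g,w])=0$ by invariance) and it factors through a solvable quotient of $\g$. The key lemma is that \emph{a solvable Lie algebra of infinitesimal isometries of a quadratic space of dimension $\ge2$ over $\CC$ stabilises an isotropic line.} Granting it, lift a stable isotropic line $\CC\bar v\subseteq\qk$ to $v\in I^\bot$; then $I+\CC v$ is isotropic (since $v\in I^\bot$ and $B(v,v)=0$) and is an ideal of $\g$ (since $[\g,v]\subseteq\CC v+I$), strictly larger than $I$, contradicting maximality. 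Hence $\dim I=\tfrac12\dim\g$.

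I expect this lemma to be the main obstacle, precisely because an ideal of the Lie algebra $\qk$ need not lift to an ideal of $\g$: the group $\g$ acts on $\qk$ through more than the inner derivations of $\qk$, so one must hunt for subspaces stable under the whole $\g$-action rather than for ideals of $\qk$. I would prove the lemma with Lie's theorem. Pick a common eigenvector $\bar v$, say $g\cdot\bar v=\lambda(g)\bar v$; skew-symmetry gives $B(g\cdot\bar v,\bar v)=0$ for all $g$, hence $\lambda(g)B(\bar v,\bar v)=0$. If $\lambda\not\equiv0$ then $\bar v$ is isotropic and its eigenline is the stable isotropic line. Otherwise every common eigenvector is invariant; consider $\mathfrak{m}=\qk^{\g}$. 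If $\mathfrak{m}$ contains a non-zero isotropic vector we are done, and if not the induced form is anisotropic on $\mathfrak{m}$, forcing $\dim\mathfrak{m}\le1$ over $\CC$. Then $\mathfrak{m}^\bot$ is a non-zero $\g$-stable subspace whose only invariant is $0$, so a common eigenvector there must have $\lambda\not\equiv0$ and is isotropic by the same dichotomy. This is where solvability (through Lie's theorem) and algebraic closedness (anisotropic $\Rightarrow\dim\le1$) are indispensable, and it is the step I would treat with the most care.
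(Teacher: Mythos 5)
The paper offers no proof of this proposition --- it is quoted from \cite{Bor97} --- and your argument is correct and essentially reproduces Bordemann's own: first reduce to the existence of a Lagrangian ideal by taking a maximal isotropic ideal $I$ and, if $\dim I < \frac{1}{2}\dim\g$, applying Lie's theorem to the skew-symmetric action of $\g$ on $I^\bot/I$ to find a $\g$-stable isotropic line enlarging $I$; then the identification of $\g$ with $T^*_\theta(\g/I)$ via an isotropic complement is the standard formal step. The two delicate points --- using even-dimensionality to guarantee $\dim(I^\bot/I)\ge 2$, and disposing of the zero-weight case by noting that an anisotropic fixed subspace over $\CC$ has dimension at most one --- are exactly the ones that need care, and you handle both correctly.
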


As in Proposition \ref{prop1.9}, a double extension by an inner derivation is decomposable. We give here a similar situation for a $T^*$-extension decomposable as follows.
\begin{prop}\label{prop1.16}
Assume that $\g$ is a Lie algebra with $\Zs(\g)\neq\{0\}$ and $\theta$ is a cyclic 2-cocycle of $\g$. If there are a nonzero $X\in\Zs(\g)$ and a nonzero $a\in\CC$ such that
$$\theta(X,Y)=aX^*\circ{\ad}(Y),\ \forall\ Y\in\g,
$$
where $X^*$ denotes the dual form of $X$, then $T^*_{\theta}(\g)$ is decomposable.
\end{prop}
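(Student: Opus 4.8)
The plan is to exhibit an explicit nonzero central element in $T^*_\theta(\g)$ that is non-isotropic, and then invoke Proposition \ref{prop1.2} exactly as in the proof of Proposition \ref{prop1.9}. The guiding analogy is the double-extension-by-inner-derivation case: there, the derivation $C=\ad_\g(X_0)$ being inner allowed us to correct $e$ by $X_0$ to produce a central element $e-X_0$ pairing nontrivially with the central $f$. Here the cocycle $\theta$ is ``inner'' in the sense that it is built from the coadjoint action of a single central element $X$, and I expect a parallel correction to work: I will look for an element of the form $X+\lambda f_0$, for a suitable $f_0\in\g^*$ and scalar $\lambda$, that is central in $T^*_\theta(\g)$ and has nonzero $B$-value against some other central element.

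First I would compute $[X+f,\,Y+g]$ for $X$ our fixed central element, using the bracket of Definition \ref{defn1.4}. Since $X\in\Zs(\g)$ we have $[X,Y]_\g=0$, and $\ad^*(Y)(f)$ survives, $\ad^*(X)(g)=0$ because $\ad(X)=0$, and the cocycle term is $\theta(X,Y)=aX^*\circ\ad(Y)$. The key identity to verify is that the form $f:=a\,X^*$ on $\g$, viewed as an element of $\g^*\subset T^*_\theta(\g)$, transforms under $\ad^*$ precisely so as to cancel the cocycle term: concretely I expect $-\ad^*(Y)(aX^*)$ evaluated on any $Z\in\g$ to equal $-aX^*([Y,Z])=-aX^*\circ\ad(Y)(Z)=-\theta(X,Y)(Z)$. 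Thus $[X - aX^*,\,Y+g]$ should vanish on the $\g$-part and on the cocycle part simultaneously, leaving only $\ad^*(X)(g)=0$; hence $u:=X-aX^*$ is central in $T^*_\theta(\g)$.

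Next I would record that $\g^*$ is a totally isotropic central-in-degree abelian ideal: in particular $X^*\in\g^*$ is itself central (elements of $\g^*$ bracket to zero with everything, since the bracket only produces $\g$-components from the $\g$-parts and here one factor lies in $\g^*$), and $B(u,X^*)=B(X-aX^*,X^*)=X^*(X)+0=X^*(X)=1\neq 0$ by the pairing formula $B(X+f,Y+g)=f(Y)+g(X)$. So $u$ and $X^*$ are two central elements with $B(u,X^*)=1$, and $a\neq 0$ guarantees $u\neq X^*$ is genuinely a nontrivial correction rather than forcing degeneracy. By Proposition \ref{prop1.2}(1), the two-dimensional non-degenerate subspace $\spa\{u,X^*\}$ splits off as a non-degenerate ideal (it is an ideal precisely because both generators are central), and its orthogonal complement is a complementary non-degenerate ideal, so $T^*_\theta(\g)$ is decomposable.

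The main obstacle I anticipate is the verification in the second paragraph that $-\ad^*(Y)(aX^*)$ really cancels $\theta(X,Y)=aX^*\circ\ad(Y)$ on the nose; this hinges on the sign and duality conventions for $\ad^*$, namely whether $\ad^*(Y)(h)=-h\circ\ad(Y)$ or $+h\circ\ad(Y)$ as used throughout the paper. I would pin this down from the semidirect-product bracket in the Introduction and from Definition \ref{defn1.4}, and the cyclicity of $\theta$ should make the identification consistent; everything else is the routine bookkeeping of checking that $u$ brackets trivially against the $\g^*$-summand as well, which is immediate since $\g^*$ is abelian and $\ad^*(X)=0$.
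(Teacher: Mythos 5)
Your first step is correct and coincides with the paper's own proof: with the convention ${\ad}^*(Y)(h)=-h\circ{\ad}(Y)$, one computes $[X-aX^*,\,Y+g]=[X,Y]_\g+{\ad}^*(X)(g)+a\,{\ad}^*(Y)(X^*)+\theta(X,Y)=0+0-a\,X^*\circ{\ad}(Y)+\theta(X,Y)=0$, so $u:=X-aX^*$ is indeed central in $T^*_{\theta}(\g)$.

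The second paragraph, however, contains a genuine error that breaks your conclusion. You assert that $X^*$ is central because ``elements of $\g^*$ bracket to zero with everything''. That misreads the bracket of Definition \ref{defn1.4}: $\g^*$ is an abelian \emph{ideal}, but it is not central, since $\g$ acts on it by the coadjoint action, $[Y,h]={\ad}^*(Y)(h)=-h\circ{\ad}(Y)$. In your setting $[Y,X^*]=-X^*\circ{\ad}(Y)=-\frac{1}{a}\,\theta(X,Y)$, which vanishes for all $Y$ only in the degenerate case $\theta(X,\cdot)\equiv 0$; concretely, in the paper's Example \ref{ex1.6} (the Heisenberg algebra $\hk_3$, with $Z$ playing the role of your $X$) one has $[Y,Z^*]=X^*\neq 0$. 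So $X^*$ is not central, ${\spa}\{u,X^*\}$ is not an ideal, and the two-dimensional splitting you propose does not exist. The repair is shorter than the detour and is exactly what the paper does: since $B(u,u)=(-aX^*)(X)+(-aX^*)(X)=-2a\neq 0$, the line $\CC u$ is by itself a one-dimensional non-degenerate ideal (an ideal because $u$ is central), so Proposition \ref{prop1.2}(1) gives $T^*_{\theta}(\g)=\CC u\oplusp(\CC u)^{\bot}$, hence decomposability. No second central element is needed, and this is where the hypothesis $a\neq 0$ is actually used.
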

\begin{proof}
We can show that if there are such $X$ and $a$ then $X-aX^*$ is central. 
Moreover, $B(X-aX^*,X-aX^*)=-2a\neq 0$ then $T^*_{\theta}(\g)$ is decomposable.
\end{proof}

\begin{ex}\label{ex1.6}
Denote by $\hk_3$ the Lie algebra spanned by $\{X,Y,Z\}$ such that $[X,Y]=Z$. Assume $\{X^*,Y^*,Z^*\}$ the dual basis of $\{X,Y,Z\}$ and let $\theta$ be a non-trivial cyclic 2-cocycle of $\hk_3$. It is easy to compute that $\theta(X,Y)=\lambda Z^*$, $\theta(Y,Z)=\lambda X^*$ and $\theta(Z,X)=\lambda Y^*$ where $\lambda$ is nonzero in $\CC$. Then $T^*_{\theta}(\hk_3)$ is decomposable since $\theta(Z,T)=\lambda Z^*\circ{\ad}(T)$ for all $T\in\hk_3$.
\end{ex}
\begin{rem} \label{rem1.7}
Let $\g$ be a Lie algebra, $\theta$ a cyclic 2-cocycle of $\g$ and a nonzero $\lambda\in\CC$. Then two $T^*$-extensions $T^*_{\theta}(\g)$ and $T^*_{\lambda\theta}(\g)$ are isomorphic by the isomorphism $A:T^*_{\theta}(\g)\rightarrow T^*_{\lambda\theta}(\g)$, $A(X+f)=X+\lambda f$.
\end{rem}
\subsection{$T^*$-extensions of three-dimensional solvable Lie algebras}
\hspace{0.8cm}As we known, if $\g$ is a solvable Lie algebra of dimension 3 then it is isomorphic to each of the following Lie algebras:

\begin{enumerate}
\item $\g_{3,0}$: Abelian,
	\item $\g_{3,1}$: $[X,Y]=Z$,
	\item $\g_{3,2}$: $[X,Y]=Y$ and $[X,Z]=Y+Z$,
	\item $\g_{3,3}$: $[X,Y]=Y$ and $[X,Z]=\mu Z$ with $|\mu|\leq 1$.
\end{enumerate}
\begin{prop}\label{prop1.19} Let $\g$ be a solvable Lie algebra of dimension 3 and $\theta$ a cyclic 2-cocycle of $\g$. If the $T^*$-extension of $\g$ by means of $\theta$ is indecomposable then it is i-isomorphic to each of quadratic Lie algebras as follows:

\begin{enumerate}
	\item $\g_{6,1}$: $[X,Y]=Z$, $[X,Z^*]=-Y^*$ and $[Y,Z^*]=X^*$,
	\item $\g_{6,2}$: $[X,Y]=Y$, $[X,Z]=Y+Z$, $[X,Y^*]=-Y^*-Z^*$, $[X,Z^*]=-Z^*$ and $[Y,Y^*]=[Z,Y^*]=[Z,Z^*]=X^*$,
	\item $\g_{6,3}$: $[X,Y]=Y$, $[X,Z]=\mu Z$, $[X,Y^*]=-Y^*$, $[X,Z^*]=-\mu Z^*$, $[Y,Y^*]=X^*$ and $[Z,Z^*]=\mu X^*$ where $|\mu|\leq 1$, $\mu \neq -1$.
\end{enumerate}
\end{prop}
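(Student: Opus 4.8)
The plan is to treat the four algebras $\g_{3,0},\g_{3,1},\g_{3,2},\g_{3,3}$ one at a time, and the first step is to determine all cyclic $2$-cocycles of a given $3$-dimensional $\g$. Writing $T(X,Y,Z):=\theta(X,Y)(Z)$, the cyclic condition together with the skew-symmetry of $\theta$ forces $T$ to be a totally alternating trilinear form, that is, an element of $\Wedge^3\gd$; moreover a short computation shows that the $2$-cocycle identity is exactly the closedness of $T$ under the Chevalley--Eilenberg differential, which lands in $\Wedge^4\gd=0$ because $\dim\g=3$. Hence every alternating $T$ is admissible and the cyclic $2$-cocycles form the one-parameter family $\theta(X,Y)=\lambda Z^*$, $\theta(Y,Z)=\lambda X^*$, $\theta(Z,X)=\lambda Y^*$ in the given basis $\{X,Y,Z\}$, with $\lambda\in\CC$.

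Next, for each $\g_{3,i}$ I would write down $T^*_\theta(\g_{3,i})$ explicitly, the bracket being the coadjoint action twisted by $\theta$. A direct check then identifies the three model algebras with the untwisted extensions, $\g_{6,1}\iiso T^*_0(\g_{3,1})$, $\g_{6,2}\iiso T^*_0(\g_{3,2})$ and $\g_{6,3}\iiso T^*_0(\g_{3,3})$. To control the parameter $\lambda$ up to i-isomorphism I would lift automorphisms of $\g_{3,i}$ to isometries of the extension: a diagonal rescaling of the form $X\mapsto tX$, $X^*\mapsto t^{-1}X^*$ (extended to the whole basis) preserves the canonical bilinear form and multiplies $\theta$ by a fixed power of $t$, so that every nonzero $\lambda$ can be normalised and only the cases $\lambda=0$ and $\lambda\neq0$ remain to be compared.

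The heart of the argument is the decomposability analysis. For $\g_{3,1}$ with $\lambda\neq0$ the extension is decomposable by Proposition~\ref{prop1.16} applied to the central element $Z$ (this is Example~\ref{ex1.6}), so only $\lambda=0$ survives and yields $\g_{6,1}$; for $\g_{3,0}$ the case $\lambda=0$ is abelian, hence decomposable, while $\lambda\neq0$ produces a $2$-step nilpotent algebra with $3$-dimensional center equal to its derived ideal, on which the commutator induces an isomorphism $\Wedge^2(\g/\Zs(\g))\xrightarrow{\sim}\Zs(\g)$, and I would identify this with $\g_{6,1}$. For the non-unimodular algebras $\g_{3,2}$ and $\g_{3,3}$ (with $\mu\neq-1$) the form $T$ is exact, so I expect the $\lambda\neq0$ extension to be i-isomorphic to the untwisted one; I would exhibit the connecting isometry directly as a shear $X+f\mapsto X+f+\sigma(X)$ for a suitable symmetric $\sigma:\g\to\gd$. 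The remaining representatives $T^*_0(\g_{3,2})=\g_{6,2}$ and $T^*_0(\g_{3,3})=\g_{6,3}$ are indecomposable: each is a nonzero solvable quadratic Lie algebra with $1$-dimensional center, and by Proposition~\ref{prop1.2}(2) every nonzero factor of an orthogonal decomposition of a solvable quadratic algebra has nonzero center, forcing $\dim\Zs(\g)\geq2$ in the decomposable case.

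Finally I would check that the three models are pairwise non-i-isomorphic and determine the redundancy in the parameter $\mu$, using isomorphism invariants: $\g_{6,1}$ is nilpotent whereas $\g_{6,2},\g_{6,3}$ are not, and the adjoint action of the non-nilpotent generator distinguishes the latter two (a single nontrivial Jordan block for $\g_{6,2}$ versus a diagonalisable action with eigenvalue ratio $\mu$ for $\g_{6,3}$), which also gives $\g_{6,3}^\mu\iiso\g_{6,3}^{\mu'}$ exactly when $\mu'\in\{\mu,\mu^{-1}\}$, matching the normalisation $|\mu|\leq1$. The main obstacle I anticipate is twofold: constructing the explicit isometries that both normalise $\lambda$ and trivialise the exact cocycles in the non-unimodular cases, and handling the unimodular value $\mu=-1$ separately, since there $T$ is no longer exact and the twisted extension need not reduce to the untwisted one --- this is the source of the hypothesis $\mu\neq-1$.
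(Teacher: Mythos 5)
Your cohomological setup is correct and attractive: cyclic $2$-cocycles of a three-dimensional $\g$ are exactly the alternating forms in $\Wedge^3\gd$ (closedness is automatic in dimension $3$), coboundaries of alternating $\sigma:\g\to\gd$ correspond to $d(\Wedge^2\gd)$, and exactness of the volume form is equivalent to non-unimodularity. (The paper itself prints no proof of this proposition, deferring to \cite{DLP12}, so yours is an independent route.) The genuine gap is the case you explicitly postpone: $\g_{3,3}$ with $\mu=-1$; call it $\ak$. The hypothesis $\mu\neq -1$ in the statement restricts only the \emph{target list}, not the input: $\ak$ is a solvable three-dimensional Lie algebra, so its indecomposable $T^*$-extensions must still be shown to land in the list, and they do occur. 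Since $\ak$ is unimodular, your shear trick is unavailable there, and by your own center criterion both $T^*_0(\ak)$ and $T^*_\lambda(\ak)$, $\lambda\neq 0$, are indecomposable: their derived ideal is five-dimensional, so by Proposition \ref{prop1.2} the center is the isotropic line $\CC X^*$. The missing identifications are $T^*_0(\ak)\iiso\g_{6,3}$ with $\mu=1$, and $T^*_\lambda(\ak)\iiso\g_{6,2}$ for $\lambda\neq 0$; for the latter, $\ad(X)$ acts on ${\spa}\{Y,Z,Y^*,Z^*\}$ with Jordan type $J_2(1)\oplus J_2(-1)$, exactly as in $\g_{6,2}$, both algebras are one-dimensional double extensions of the four-dimensional abelian quadratic space, and two skew-symmetric endomorphisms with the same Jordan type are conjugate under the complex orthogonal group, whence the i-isomorphism. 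Neither step appears in your sketch.

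Worse, your redundancy criterion actively blocks the first identification: you assert $\g_{6,3}(\mu)\iiso\g_{6,3}(\mu')$ exactly when $\mu'\in\{\mu,\mu^{-1}\}$, but the correct condition is $\mu'\in\{\pm\mu,\pm\mu^{-1}\}$ --- the isometry fixing $X,Y,Y^*,X^*$ and exchanging $Z\leftrightarrow Z^*$ carries $\g_{6,3}(\mu)$ onto $\g_{6,3}(-\mu)$ --- and it is precisely this sign flip that absorbs $\mu=-1$ into $\mu=1$ (it also shows the proposition's own list double counts, pairing $\mu$ with $-\mu$). Under your criterion, $\g_{6,3}(-1)$ would be i-isomorphic to no listed algebra, so your lemmas would refute the proposition rather than prove it. Three smaller repairs: the shear $X+f\mapsto X+f+\sigma(X)$ preserves $B$ only when $\sigma(X)(Y)=-\sigma(Y)(X)$, so $\sigma$ must be alternating (matching $T-T'=dS$ with $S\in\Wedge^2\gd$), not symmetric; the blanket claim that $\g_{6,3}$ is indecomposable fails at $\mu=0$, where $\g_{6,3}=\g_4\oplusp\CC^2$ (harmless for the implication, but false as written); and the identification $T^*_\lambda(\g_{3,0})\iiso\g_{6,1}$ needs its witness, e.g.\ $Z\mapsto\lambda Z^*$, $Z^*\mapsto\lambda^{-1}Z$ and the identity on the remaining basis vectors.
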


\begin{rem}
The result given in Proposition \ref{prop1.19} is a complete classification for the solvable case of dimension 6 (see \cite{DLP12}).
\end{rem}
\section{Quadratic Lie superalgebras}
\begin{defn}Let $\g = \g_\ze\oplus\g_\un$ be a Lie superalgebra. We assume that there is a non-degenerate invariant supersymmetric bilinear form $B$ on $\g$, i.e. $B$ satisfies:
\begin{enumerate}
	\item $B(Y,X) = (-1)^{xy}B(X,Y)$ (supersymmetric) for all $X\in\g_x,\ Y\in\g_y$,
	\item $B([X,Y],Z) = B(X,[Y,Z])$ (invariant) for all $X,Y,Z \in\g.$
\end{enumerate}

If $B$ is even, that is $B(\g_\ze,\g_\un) =0$, then we call $\g$ a \emph{quadratic Lie superalgebra}. In this case, $B_0=B|_{\g_\ze\times\g_\ze}$ and $B_1=B|_{\g_\un\times\g_\un}$ are non-degenerate, consequently $(\g_\ze,B_0)$ is a quadratic Lie algebra and $(\g_\un,B_1)$ is a symplectic vector space. The invariancy of $B$ implies that $B_1({\ad}_X(Y),Z) = -B_1(Y,{\ad}_X(Z))$ for all $X\in\g_\ze,\ Y,Z\in\g_\un$. Therefore, the adjoint representation is a homomorphism from the Lie algebra $\g_\ze$ into the Lie subalgebra $\spk(\g_\un,B_1)$ of $\End(\g_\un,\g_\un)$.

If $B$ is odd, i.e. $B(\g_\alpha,\g_\alpha)=0$ for all $\alpha\in\ZZ_2$, then we call $\g$ an \emph{odd quadratic Lie superalgebra}. In this case, $\g_\ze$ and $\g_\un$ are totally isotropic subspaces of $\g$ and $\dim(\g_\ze) = \dim(\g_\un)$.
\end{defn}

Proposition \ref{prop1.2} is still right here so that we have similarly sequential properties for quadratic Lie superalgebras and odd quadratic Lie superalgebras.


For the notion of 1-step double extension in the quadratic Lie superalgebras case, we refer the reader to \cite{DU12} (the definition of double extension in the general case is given in \cite{BB99}):

%
Clearly, the definition of the semidirect product of a Lie algebra $\g$ and its dual space $\g^*$ by the coadjoint representation can be comprehended that the Lie algebra $\g$ is "glued" by $\g^*$ while the notion of double extension is more general by such "gluing" but combined with a quadratic Lie algebra $(\hk,B)$ to get a new quadratic Lie algebra. From this point of view, what happens when we glue the dual space $\g^*$ to $\g$ combined with a {\em symplectic} vector space $(\hk,B)$ to obtain a quadratic Lie superalgebra? Obviously, in this case, the even part is $\g\oplus\g^*$ and the odd is $\hk$.

The following lemma is straightforward.
\begin{lem}\label{lem1.8}
Let $\g$ be a Lie algebra and $(\hk,B_{\hk})$ a symplectic vector space with symplectic form $B_{\hk}$. Let $\psi:\g\rightarrow\End(\hk)$ be a Lie algebra endomorphism satisfying:
$$
B_{\hk}(\psi(X)(Y),Z)=-B_{\hk}(Y,\psi(X)(Z)),\ \forall\ X\in\g,\ Y,Z\in\hk.
$$
Denote by $\phi:\hk\times\hk\rightarrow\g^*$ the bilinear map defined by $\phi(X,Y)Z=B_{\hk}(\psi(Z)(X),Y)$ for all $X,\ Y\in\hk$, $Z\in\g$ then $\phi$ is symmetric. \end{lem}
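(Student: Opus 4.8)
The plan is to verify directly that $\phi(X,Y)$ and $\phi(Y,X)$ coincide as elements of $\g^*$; since both are linear functionals on $\g$, it suffices to show that they agree on an arbitrary $Z\in\g$. First I would simply unwind the definition: by construction $\phi(X,Y)Z = B_{\hk}(\psi(Z)(X),Y)$, so the whole matter reduces to comparing this scalar with $\phi(Y,X)Z = B_{\hk}(\psi(Z)(Y),X)$.

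The key step is to transport the operator $\psi(Z)$ from one slot of the symplectic form to the other. Applying the hypothesis on $\psi$ (the skew-symmetry relation $B_{\hk}(\psi(X)(Y),Z)=-B_{\hk}(Y,\psi(X)(Z))$), with the $\g$-argument taken to be $Z$ and the two $\hk$-arguments taken to be $X$ and $Y$, I obtain
$$
\phi(X,Y)Z = B_{\hk}(\psi(Z)(X),Y) = -B_{\hk}(X,\psi(Z)(Y)).
$$
Next I would invoke the defining antisymmetry of the symplectic form $B_{\hk}$ to swap its two entries, which produces a second sign change:
$$
-B_{\hk}(X,\psi(Z)(Y)) = B_{\hk}(\psi(Z)(Y),X) = \phi(Y,X)Z.
$$
Since $Z\in\g$ was arbitrary, this yields $\phi(X,Y)=\phi(Y,X)$, i.e. $\phi$ is symmetric.

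There is no genuine obstacle here — the argument is a two-line computation, which is presumably why the author calls the lemma straightforward. The only point demanding care is the bookkeeping of signs: the proof hinges on the fact that the skew-symmetry of $\psi$ with respect to $B_{\hk}$ and the antisymmetry of the symplectic form itself each contribute one minus sign, and these two minus signs cancel to give symmetry of $\phi$ rather than antisymmetry. Bilinearity of $\phi$ in its two $\hk$-arguments requires no separate verification, as it follows at once from the bilinearity of $B_{\hk}$ and the linearity of each $\psi(Z)$.
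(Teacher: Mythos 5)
Your proof is correct: the two sign changes (one from the skew-symmetry hypothesis on $\psi$, one from the antisymmetry of the symplectic form $B_{\hk}$) cancel exactly as you say, giving $\phi(X,Y)Z=\phi(Y,X)Z$ for all $Z\in\g$. This is precisely the computation the paper has in mind when it declares the lemma ``straightforward'' and omits the proof, so there is nothing to add.
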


\begin{prop}\label{prop2.4}
Keep notions as in the above lemma and define on the vector space $\gb:=\g\oplus\g^*\oplus\hk$ the bracket:
\begin{eqnarray*}
  [X+f+F,Y+g+G]_{\gb} = &[X,Y]_\g + {\ad}^*(X)(g)-{\ad}^*(Y)(f)\\
  & +\psi(X)(G)-\psi(Y)(F)+\phi(F,G) 
\end{eqnarray*}
for all $X,Y\in\g$, $f,g\in\g^*$ and $F,G\in\hk$. Then $\gb$ becomes a quadratic Lie superalgebra with $\gb_\ze=\g\oplus\g^*$ and $\gb_\un=\hk$ with the bilinear form $\overline{B}$ defined by:
$$
\overline{B}(X+f+F,Y+g+G)=f(Y)+g(X)+B_{\hk}(F,G)
$$
for all $X,Y\in\g$, $f,g\in\g^*$ and $F,G\in\hk$.
\end{prop}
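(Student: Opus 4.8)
The plan is to verify the two defining requirements in turn: first that the given bracket makes $\gb$ into a Lie superalgebra, and then that $\overline{B}$ is an even, supersymmetric, non-degenerate and invariant bilinear form. Since the bracket and $\overline{B}$ are (super)bilinear and respect the decomposition $\gb=\g\oplus\g^*\oplus\hk$, it suffices to test every identity on homogeneous arguments drawn from the three summands, keeping in mind that $\gb_\ze=\g\oplus\g^*$ and $\gb_\un=\hk$. I would first record the elementary brackets $[\g^*,\g^*]=0$, $[\g,\g^*]\subseteq\g^*$ via $\ad^*$, $[\g^*,\hk]=0$, $[\g,\hk]\subseteq\hk$ via $\psi$, and $[\hk,\hk]\subseteq\g^*$ via $\phi$; these at once show the bracket is graded. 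For super-antisymmetry the even--even and even--odd cases are ordinary antisymmetry, inherited from the antisymmetry of $[\cdot,\cdot]_\g$, of $\ad^*$, and of $\psi$. The only case carrying a nontrivial sign is odd--odd, where super-antisymmetry demands $[F,G]=[G,F]$; since $[F,G]=\phi(F,G)$, this is precisely the symmetry of $\phi$ supplied by Lemma \ref{lem1.8}.

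For the super-Jacobi identity I would organise the verification by the number of odd ($\hk$) arguments. With three even arguments the identity reduces to the Jacobi identity of the semidirect product $\g\ltimes\g^*$ of the introduction, i.e. to $\g$ being a Lie algebra together with $\ad^*$ being a representation. With exactly one odd argument and two arguments in $\g$, the identity becomes $\psi([X,Y])=\psi(X)\psi(Y)-\psi(Y)\psi(X)$, which is the hypothesis that $\psi$ is a Lie algebra homomorphism (the $\g^*$ arguments act trivially on $\hk$, so those subcases are vacuous). With three odd arguments every term lies in $[\hk,\g^*]=0$, so the identity holds trivially.

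I expect the genuine obstacle to be the case of one even and two odd arguments, $[X,[F,G]]$ with $X\in\g$, which in graded-Leibniz form demands the compatibility
\[
\ad^*(X)\bigl(\phi(F,G)\bigr)=\phi(\psi(X)F,G)+\phi(F,\psi(X)G).
\]
To establish it I would evaluate both sides on $Z\in\g$, use $\ad^*(X)(h)(Z)=-h([X,Z])$ together with the definition $\phi(F,G)Z=B_{\hk}(\psi(Z)F,G)$, expand $\psi([X,Z])=\psi(X)\psi(Z)-\psi(Z)\psi(X)$, and finally move $\psi(X)$ across $B_{\hk}$ using the skew-symmetry hypothesis $B_{\hk}(\psi(X)Y,Z)=-B_{\hk}(Y,\psi(X)Z)$; the two resulting terms then match the right-hand side exactly. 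The subcase with the even argument in $\g^*$ is trivial since $[\g^*,\hk]=0$.

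Finally, for $\overline{B}$: evenness is immediate because the pairing couples $\g$ with $\g^*$ and $\hk$ with itself, so $\overline{B}(\gb_\ze,\gb_\un)=0$. Supersymmetry on $\gb_\ze$ is the symmetry of the canonical pairing, while on $\gb_\un$ the required odd sign $\overline{B}(G,F)=-\overline{B}(F,G)$ is exactly the antisymmetry of the symplectic form $B_{\hk}$ --- this is the structural reason $\hk$ must be symplectic rather than orthogonal. Non-degeneracy follows blockwise, from non-degeneracy of the dual pairing on $\g\oplus\g^*$ and of $B_{\hk}$ on $\hk$. For invariance I would split $\overline{B}([U,V],W)-\overline{B}(U,[V,W])$ into its even and odd contributions: the even part cancels by invariance of $\overline{B}$ on $\g\ltimes\g^*$ (already used above), and the odd part reduces, after applying the definition of $\phi$ and the skew-symmetry of $\psi$, to a cancellation of $B_{\hk}$-terms identical to the one appearing in the Jacobi computation. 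Collecting these verifications yields the proposition.
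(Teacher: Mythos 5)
Your proposal is correct, and it takes essentially the approach the paper intends: the paper omits this proof entirely, saying only that it is ``a straightforward but lengthy computation,'' and your case-by-case verification (grading, super-antisymmetry via the symmetry of $\phi$, the super-Jacobi identity organised by the number of odd arguments with the one-even/two-odd case as the only nontrivial compatibility, and the blockwise properties of $\overline{B}$) is precisely that computation, carried out correctly.
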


\begin{proof}
The proof is a straightforward but lengthy computation so we omit it.
\end{proof}

Now we combine the above proposition with Definition \ref{defn1.4} and Proposition \ref{prop1.9} to get a more general result as follows.
\begin{prop}
Let $\g$ be a Lie algebra and $\theta:\g\times\g\rightarrow\g^*$ a 2-cocycle of $\g$. Assume $(\hk,B_{\hk})$ a symplectic vector space with symplectic form $B_{\hk}$. Let $\psi:\g\rightarrow\End(\hk)$ be a Lie algebra endomorphism satisfying $B_{\hk}(\psi(X)(Y),Z)=-B_{\hk}(Y,\psi(X)(Z))$ for all $X\in\g$, $Y,Z\in\hk$. Denote by $\phi:\hk\times\hk\rightarrow\g^*$ the bilinear map defined by $\phi(X,Y)Z=B_{\hk}(\psi(Z)(X),Y)$ for all  $X,\ Y\in\hk$, $Z\in\g$ and define on the vector space $\gb:=\g\oplus\g^*\oplus\hk$ the bracket:
\begin{eqnarray*}
  [X+f+F,Y+g+G]_{\gb} = &[X,Y]_\g + {\ad}^*(X)(g)-{\ad}^*(Y)(f)+\theta(X,Y)\\
  & +\psi(X)(G)-\psi(Y)(F)+\phi(F,G) 
\end{eqnarray*}
for all $X,Y\in\g$, $f,g\in\g^*$ and $F,G\in\hk$. Then $\gb$ becomes a Lie superalgebra with $\gb_\ze=\g\oplus\g^*$ and $\gb_\un=\hk$. Moreover, if $\theta$ is cyclic then $\gb$ is a quadratic Lie superalgebra with the bilinear form $\overline{B}(X+f+F,Y+g+G)=f(Y)+g(X)+B_{\hk}(F,G)$ for all $X,Y\in\g$, $f,g\in\g^*$ and $F,G\in\hk$.
\end{prop}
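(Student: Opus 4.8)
The plan is to observe that the bracket defining $\gb$ differs from the one in Proposition \ref{prop2.4} only by the extra summand $\theta(X,Y)$, which lives in $\g^*$ and is produced solely by the even--even part of the bracket. Since $\theta$ takes values in $\g^*$ and a direct check gives $[\g^*,\hk]=[\hk,\g^*]=0$, this correction interacts with the odd part $\hk$ only trivially. I would therefore treat $\gb$ as a $\theta$-deformation of the quadratic Lie superalgebra of Proposition \ref{prop2.4}, so that every verification splits into a ``$\psi,\phi$''-part, already controlled by Proposition \ref{prop2.4} and Lemma \ref{lem1.8}, and a ``$\theta$''-part, which only ever appears among purely even triples and is hence governed by the $T^*$-extension conditions of Definition \ref{defn1.4}.

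First I would record that the bracket respects the grading $\gb_\ze=\g\oplus\g^*$, $\gb_\un=\hk$: the even--even bracket lands in $\g\oplus\g^*$, the mixed bracket $[X+f+F,G]=\psi(X)(G)$ lands in $\hk$, and the odd--odd bracket $[F,G]=\phi(F,G)$ lands in $\g^*$. Super-antisymmetry then reduces to three checks: antisymmetry on the even part (the $T^*$-extension bracket), antisymmetry of the mixed bracket (immediate from $[X,G]=\psi(X)(G)=-[G,X]$), and \emph{symmetry} of $\phi$ on the odd part, which is exactly Lemma \ref{lem1.8} and is the correct sign for two odd elements. The graded Jacobi identity I would then verify according to the parities of the three arguments. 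For three even arguments it is precisely the Jacobi identity of the $T^*$-extension $T^*_\theta(\g)$, valid because $\theta$ is a $2$-cocycle; this is the only parity pattern in which $\theta$ contributes at all, since any odd argument annihilates the $\g^*$-valued $\theta$-term through $[\hk,\g^*]=0$. For two even and one odd argument the identity collapses to $\psi([X,Y])=\psi(X)\psi(Y)-\psi(Y)\psi(X)$, i.e.\ to $\psi$ being a homomorphism. For one even and two odd arguments it becomes the identity $\ad^*(X)(\phi(F,G))=\phi(\psi(X)F,G)+\phi(F,\psi(X)G)$ in $\g^*$, which unwinds, via the definition of $\phi$, the homomorphism property, and the $\psi$-skew-symmetry of $B_{\hk}$, into a cancellation of four terms of the shape $B_{\hk}(\psi(X)\psi(Z)(\cdot),\cdot)$. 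For three odd arguments every term vanishes because $[\hk,\g^*]=0$.

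For the quadratic assertion (assuming $\theta$ cyclic) I would check that $\overline{B}$ is even (it pairs $\g$ with $\g^*$ and $\hk$ with $\hk$, so $\overline{B}(\gb_\ze,\gb_\un)=0$), non-degenerate (the $\g$--$\g^*$ pairing is perfect and $B_{\hk}$ is non-degenerate), and supersymmetric --- noting that on $\hk$ the form is $B_{\hk}$, whose \emph{antisymmetry} is exactly the sign $(-1)^{|F||G|}=-1$ required for two odd vectors. Invariance $\overline{B}([a,b],c)=\overline{B}(a,[b,c])$ I would again split by parity: the mixed-parity identities reduce, as in Proposition \ref{prop2.4}, to the definition of $\phi$ and the $\psi$-skew-symmetry of $B_{\hk}$, while the $\theta$-contribution arises only for three even arguments, where $\overline{B}([a,b],c)$ picks up $\theta(X,Y)(Z)$ and $\overline{B}(a,[b,c])$ picks up $\theta(Y,Z)(X)$; these agree precisely because $\theta$ is cyclic, which is the hypothesis distinguishing the two assertions.

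I expect the main obstacle to be the one-even/two-odd Jacobi case and, in parallel, the corresponding mixed invariance identities: these are where the definition of $\phi$, the homomorphism property, the $\psi$-skew-symmetry and the antisymmetry of $B_{\hk}$ must be combined with the correct graded signs, and where a sign error is easiest to commit. By contrast, isolating the role of $\theta$ to the purely even sector via $[\hk,\g^*]=0$ is what turns the rest of the argument into a bookkeeping reduction to Definition \ref{defn1.4} and Proposition \ref{prop2.4}, which is why these routine computations may reasonably be compressed.
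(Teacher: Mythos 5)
Your proposal is correct and follows essentially the route the paper itself indicates: the paper offers no written-out proof, merely stating that the result is obtained by combining Proposition \ref{prop2.4} with the $T^*$-extension of Definition \ref{defn1.4}, and your parity-by-parity reduction --- isolating the $\theta$-term to purely even triples via $[\g^*,\hk]=0$, invoking the $2$-cocycle (resp.\ cyclic) condition there, and deferring the remaining cases to the computations behind Proposition \ref{prop2.4} and Lemma \ref{lem1.8} --- is exactly the verification that combination requires. The details you supply (the homomorphism identity for the two-even/one-odd case, the identity $\ad^*(X)(\phi(F,G))=\phi(\psi(X)F,G)+\phi(F,\psi(X)G)$ for the one-even/two-odd case, and the cyclicity of $\theta$ for invariance on even triples) are all accurate.
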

\subsection{Quadratic Lie superalgebras of dimension 4}
\hspace{0.8cm}Let $\g$ be a quadratic Lie superalgebra. If $\dim(\g_\ze) \leq 1$ then $\g$ is Abelian \cite{DU12}. Therefore, in the non-Abelian four-dimensional case, we only consider $\dim(\g_\un) =0$ or $\dim(\g_\ze) = \dim(\g_\un) = 2$. 

If $\dim(\g_\un) =0$ then $\g$ is isomorphic to $\ok(3)\oplus\CC$ or the diamond Lie algebra $\g_4$. 

If $\dim(\g_\ze) = \dim(\g_\un) = 2$ then there is  a classification up to i-isomorphism given in \cite{DU12} as follows:

\begin{enumerate}
	\item $\g_{4,1}^s = \g_\ze\oplus\g_\un$, where $\g_\ze ={\spa}\{X_{\zero}, Y_{\zero}\}$ and $\g_\un={\spa}\{X_\un, Y_\un\}$ such that the non-zero bilinear form $B(X_{\zero}, Y_{\zero})=B(X_{\un}, Y_{\un})=1$ and the non-trivial Lie super-bracket $[Y_{\un},Y_{\un}]=-2X_{\zero}$, $[Y_{\zero},Y_{\un}]=-2X_{\un}$.
\item $\g_{4,2}^s = \g_\ze\oplus\g_\un$, where $\g_\ze ={\spa}\{X_{\zero}, Y_{\zero}\}$ and $\g_\un={\spa}\{X_\un, Y_\un\}$ such that the non-zero bilinear form $B(X_{\zero}, Y_{\zero})=B(X_{\un}, Y_{\un})=1$ and the non-trivial Lie super-bracket $[X_{\un},Y_{\un}]=X_{\zero}$, $[Y_{\zero},X_{\un}]=X_{\un}$, $[Y_{\zero},Y_{\un}]=-Y_{\un}$.
\end{enumerate}

\begin{rem}
We can obtain the above result by the method of double extension as follows: $\g$ can be seen as a double extension of the two-dimensional symplectic vector space $\qk$ by a map $C\in\spk(\qk)$. Since there are only two cases: $C$ nilpotent or $C$ semi-simple, the result follows.
\end{rem}

\subsection{Quadratic Lie superalgebras of dimension 5}
\hspace{0.8cm}First we recall a result in \cite{BB99} that is useful for our classification as follows.
\begin{prop}\label{prop2.7}
Let $\g$ be an indecomposable quadratic Lie superalgebra such that $\dim(\g_\un)=2$ and $\Zs(\g_\ze)\neq\{0\}$, where $\Zs(\g_\ze)=\{X\in\g_\ze\ |\ [X,Y]=0$ for all $Y\in\g_\ze\}$. Then $\Zs(\g)\cap\g_\ze\neq\{0\}$. Moreover, if $\dim([\g_\un,\g_\un])\geq 2$ then $\Zs(\g_\ze)\subset\Zs(\g)$.
\end{prop}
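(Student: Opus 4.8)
The plan is to transfer the whole question to the even part via the action on the odd part. Write $\rho:\g_\ze\to\spk(\g_\un,B_1)$ for the homomorphism $\rho(X)=\ad_X|_{\g_\un}$, which indeed lands in $\spk(\g_\un,B_1)\cong\spk(2,\CC)\cong\slk(2,\CC)$ because $B$ is invariant and even. I would first record the elementary identity $\Zs(\g)\cap\g_\ze=\Zs(\g_\ze)\cap\ker\rho$, valid because an even element is central in $\g$ exactly when it annihilates both $\g_\ze$ and $\g_\un$. Next I would use invariance in the form $B_0(X,[W,W'])=B_1(\rho(X)W,W')$ for $X\in\g_\ze$, $W,W'\in\g_\un$; since $B_1$ is non-degenerate this gives $[\g_\un,\g_\un]^{\bot}=\ker\rho$ inside $\g_\ze$, hence $\dim[\g_\un,\g_\un]=\dim\rho(\g_\ze)$ after taking orthogonals. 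Combined with Proposition \ref{prop1.2} applied to $(\g_\ze,B_0)$, namely $\Zs(\g_\ze)=[\g_\ze,\g_\ze]^{\bot}$, the failure condition $\Zs(\g)\cap\g_\ze=\{0\}$ translates into the purely even statement $[\g_\ze,\g_\ze]+[\g_\un,\g_\un]=\g_\ze$.

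For the ``moreover'' assertion I would invoke the structure of $\slk(2)$. If $\dim[\g_\un,\g_\un]\ge 2$ then $\dim\rho(\g_\ze)\ge 2$, so $\rho(\g_\ze)$ is either a Borel subalgebra or all of $\slk(2)$, and in either case its center is trivial. Since $\rho$ is a homomorphism, $\rho(\Zs(\g_\ze))$ lies in the center of $\rho(\g_\ze)$, whence $\rho(\Zs(\g_\ze))=\{0\}$, i.e. $\Zs(\g_\ze)\subseteq\ker\rho$; together with the first identity this yields $\Zs(\g_\ze)\subseteq\Zs(\g)$. This simultaneously settles the first assertion when $\dim[\g_\un,\g_\un]\ge 2$, while the case $\dim[\g_\un,\g_\un]=0$ (equivalently $\rho=0$) is immediate since then $\Zs(\g)\cap\g_\ze=\Zs(\g_\ze)\neq\{0\}$.

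The one delicate case, and the step I expect to be the main obstacle, is $\dim[\g_\un,\g_\un]=1$, where the $\slk(2)$ argument is vacuous and one must use indecomposability. Assume for contradiction that $\Zs(\g)\cap\g_\ze=\{0\}$, i.e. $[\g_\ze,\g_\ze]+[\g_\un,\g_\un]=\g_\ze$. A dimension count against $\dim[\g_\ze,\g_\ze]=\dim\g_\ze-\dim\Zs(\g_\ze)$ forces $\dim\Zs(\g_\ze)=1$, say $\Zs(\g_\ze)=\CC X_0$ with $\rho(X_0)\neq 0$, so that $\g_\ze=\ker\rho\oplus\CC X_0$. Using that $X_0$ is central in $\g_\ze$ I would show $[\g_\ze,\g_\ze]=\ker\rho$ by a dimension count, and hence $[\g_\un,\g_\un]=(\ker\rho)^{\bot}=\Zs(\g_\ze)=\CC X_0$ with $B_0(X_0,X_0)\neq 0$. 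The punchline is that $\sk:=\CC X_0\oplus\g_\un$ is then a non-degenerate ideal of $\g$, the fact that it is an ideal relying precisely on $[\g_\un,\g_\un]=\CC X_0$ and $[\g_\ze,\g_\un]\subseteq\g_\un$, while its orthogonal complement is $\sk^{\bot}=\ker\rho$, which is nonzero because $[\g_\un,\g_\un]\neq\{0\}$ makes $\g$ non-Abelian and so $\dim\g_\ze\ge 2$. Thus $\g=\sk\oplusp\ker\rho$ is a nontrivial orthogonal decomposition, contradicting indecomposability. The substantive work is therefore confined to isolating this one-dimensional case and exhibiting the splitting ideal $\sk$; everything else is bookkeeping with orthogonals and the first identity.
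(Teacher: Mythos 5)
The paper never proves this proposition: it is recalled verbatim from [BB99] and used as a black box, so there is no internal proof to measure yours against; what matters is whether your argument stands on its own, and it essentially does. Your reduction $\Zs(\g)\cap\g_\ze=\Zs(\g_\ze)\cap\ker\rho=\left([\g_\ze,\g_\ze]+[\g_\un,\g_\un]\right)^{\bot}$ (orthogonals taken in $(\g_\ze,B_0)$) is correct, since invariance of $B$ gives $[\g_\un,\g_\un]^{\bot}\cap\g_\ze=\ker\rho$ and Proposition \ref{prop1.2} gives $\Zs(\g_\ze)=[\g_\ze,\g_\ze]^{\bot}$; the $\slk(2)$ step is sound because $\slk(2,\CC)$ has no two-dimensional Abelian subalgebra, so any image $\rho(\g_\ze)$ of dimension at least $2$ is a Borel subalgebra or all of $\slk(2)$, both centerless; and in the case $\dim[\g_\un,\g_\un]=1$ the chain $[\g_\ze,\g_\ze]=\ker\rho$, $[\g_\un,\g_\un]=(\ker\rho)^{\bot}=\Zs(\g_\ze)=\CC X_0$ with $B_0(X_0,X_0)\neq 0$ checks out, as does the verification that $\sk=\CC X_0\oplus\g_\un$ is a non-degenerate ideal with $\sk^{\bot}=\ker\rho$, whence $\g=\sk\oplusp\ker\rho$ by the super-analogue of Proposition \ref{prop1.2}.

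The one step you must anchor explicitly is the last claim, ``$\g$ non-Abelian and so $\dim\g_\ze\ge 2$.'' This implication is not formal bookkeeping: it is exactly the result quoted from [DU12] at the start of Section 2.1 of the paper (a quadratic Lie superalgebra with $\dim\g_\ze\le 1$ is Abelian), and its proof genuinely needs the odd--odd--odd graded Jacobi identity, e.g.\ $[W,[W,W]]=0$ forces $\rho(X_0)=0$ when $\g_\ze=\CC X_0$. If you do not invoke that fact (or reprove it for $\dim\g_\un=2$), your contradiction evaporates precisely in the subcase $\dim\g_\ze=1$: there $\ker\rho=\{0\}$, so $\sk=\g$ and the orthogonal decomposition $\g=\sk\oplusp\sk^{\bot}$ is trivial, contradicting nothing. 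So cite the [DU12] statement (or supply the two-line Jacobi computation) at that point; with that, your proof is complete and, notably, gives this recalled result an elementary self-contained argument that the paper itself does not provide.
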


Let $\g$ be an indecomposable quadratic Lie superalgebras of dimension 5 and assume $\g_\un \neq \{0\}$ then we have $\dim(\g_\ze) = 3$ and $\dim(\g_\un) = 2$. 

If $\g_\ze$ is solvable then $\g_\ze$ must be Abelian \cite{PU07}. In this case, we need the following lemma.

\begin{lem}\label{lem2.8} 
Let $\g$ be an indecomposable quadratic Lie superalgebra such that $\g_\ze\neq\{0\}$. Assume that $\g_\ze$ is Abelian and $\dim(\g_\un) = 2$. Then one has $\dim(\g_\ze) = 2$.
\end{lem}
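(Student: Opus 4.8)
The plan is to exploit the quadratic structure to reduce everything to a representation of the abelian algebra $\gO$ on the two-dimensional symplectic space $\gI$, and then to use indecomposability twice to pin $\dim\gO$ from below and from above.

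First I would record the structural data. Since $\g$ is quadratic, $B_0=B|_{\gO\times\gO}$ is non-degenerate symmetric and $B_1=B|_{\gI\times\gI}$ is symplectic, and the adjoint action gives a homomorphism $S\colon\gO\to\spk(\gI,B_1)$, $X\mapsto\ad_X|_{\gI}$. Because $\dim\gI=2$ we have $\spk(\gI,B_1)\cong\slk(2,\CC)$, and because $\gO$ is abelian the image $S(\gO)$ is an abelian subalgebra of $\slk(2,\CC)$. The key rigidity input is that every abelian subalgebra of $\slk(2,\CC)$ has dimension at most $1$ (the centralizer of any nonzero element, semisimple or nilpotent, is one-dimensional), so $\dim S(\gO)\le 1$. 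Setting $\mathfrak{k}=\ker S$, the fact that $\gO$ is abelian and that $\mathfrak{k}$ acts trivially on $\gI$ makes every element of $\mathfrak{k}$ central, so $\mathfrak{k}\subseteq\Zs(\g)\cap\gO$ (in fact equality holds).

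Next I would pin down $\dim S(\gO)$. If $S=0$, the quadratic identity $B_0(X,[Y,Z])=B_1(S_X Y,Z)$ for $X\in\gO$, $Y,Z\in\gI$ forces $[\gI,\gI]=0$ by non-degeneracy of $B_0$, so $\g$ is abelian; but an abelian quadratic Lie superalgebra with $\gO\neq\{0\}$ always splits off a one-dimensional non-degenerate central ideal $\CC X$ with $B_0(X,X)\neq 0$, contradicting indecomposability. Hence $\dim S(\gO)=1$ and $\dim\mathfrak{k}=\dim\gO-1$; moreover the quoted fact that $\dim\gO\le 1$ forces $\g$ abelian rules out $\dim\gO=1$, so $\dim\gO\ge 2$. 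The heart of the argument is then to show $\dim\gO\le 2$. Suppose $\dim\gO\ge 3$, so $\dim\mathfrak{k}\ge 2$. Since $B_0$ is non-degenerate on $\gO$ and $\dim\mathfrak{k}=\dim\gO-1$, the orthogonal $\mathfrak{k}^{\bot}$ inside $\gO$ is one-dimensional, so the radical $\mathfrak{k}\cap\mathfrak{k}^{\bot}$ of $B_0|_{\mathfrak{k}}$ has dimension at most $1$ and $B_0|_{\mathfrak{k}}$ has rank at least $\dim\mathfrak{k}-1\ge 1$. In particular $B_0|_{\mathfrak{k}}$ is not identically zero, so there is $X\in\mathfrak{k}$ with $B_0(X,X)\neq 0$. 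Then $\CC X$ is a central, hence ideal, non-degenerate subspace with $0\neq\CC X\neq\g$ (its orthogonal contains $\gI\neq\{0\}$), so $\g=\CC X\oplusp(\CC X)^{\bot}$ by the super-analogue of Proposition \ref{prop1.2}, contradicting indecomposability. Thus $\dim\gO\le 2$, and with $\dim\gO\ge 2$ we conclude $\dim\gO=2$.

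The main obstacle I anticipate is the possibility that $B_0$ restricts degenerately to the central subspace $\mathfrak{k}$, which would block the splitting; this is exactly what the codimension count $\dim\mathfrak{k}^{\bot}=1$ resolves, guaranteeing a non-isotropic central direction as soon as $\dim\mathfrak{k}\ge 2$. Everything else is bookkeeping with the identity $B_0(X,[Y,Z])=B_1(S_X Y,Z)$ together with the $\slk(2,\CC)$ rigidity; the only point requiring care is that all ideals used are graded, which holds since they are built from even central vectors, so each decomposition respects the $\ZZ_2$-grading.
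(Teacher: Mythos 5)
Your proof is correct. The paper in fact states Lemma \ref{lem2.8} without any proof, so there is no argument of the author's to compare yours against; your reasoning is complete and relies only on tools the paper itself provides — the homomorphism $\g_\ze\to\spk(\g_\un,B_1)\cong\slk(2,\CC)$ whose image is at most one-dimensional since $\g_\ze$ is Abelian, the super-analogue of Proposition \ref{prop1.2} (a non-degenerate graded ideal splits off, so indecomposability forces the central subspace $\ker S\subseteq\Zs(\g)\cap\g_\ze$, of codimension at most $1$ in $\g_\ze$, to be totally isotropic, whence $\dim\g_\ze-1\le\tfrac{1}{2}\dim\g_\ze$), and the quoted fact from \cite{DU12} that $\dim\g_\ze\le 1$ forces $\g$ Abelian, which rules out $\dim\g_\ze\le 1$ and yields $\dim\g_\ze=2$.
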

		
By the above lemma, if $\g$ is a five-dimensional indecomposable quadratic Lie superalgebra with $\g_\ze$ is solvable then $\g$ must be i-isomorphic to the five-dimensional nilpotent quadratic Lie algebra $\g_5$.

If $\g_\ze$ is not solvable then $\g_\ze\simeq \ok(3)$. Choose a basis $\{X_1,X_2,X_3\}$ of $\g_\ze$ such that $B(X_i,X_j)=\delta_{ij}$, $[X_1,X_2] = X_3$, $[X_2,X_3] = X_1$ and $[X_3,X_1] = X_2$. Recall that the adjoint representation $\ad$ is a homomorphism from $\g_\ze$ onto $\spk(\qk_\un)$. If it is not an isomorphism, i.e we can assume that ${\ad}(X_1) = x{\ad}(X_2)+y{\ad}(X_3)$ then
$${\ad}(X_3) = [{\ad}(X_1),{\ad}(X_2)] = -y{\ad}(X_1)\ \text{and}\ {\ad}(X_2) = [{\ad}(X_3),{\ad}(X_1)] = -x{\ad}(X_1).
$$
That implies ${\ad}(X_2)=[{\ad}(X_1),{\ad}(X_3)]=0$. Similarly, ${\ad}(X_3) = 0$ and so ${\ad}(X_1) = 0$. It implies that $[\g_\ze,\g_\un] = \{0\}$, this is a contradiction since $\g$ is indecomposable. Therefore, the map $\ad$ must be an isomorphism from $\g_\ze$ onto $\spk(\qk_\un)$ and $\g\simeq \ospk(1,2)$.

\subsection{Solvable quadratic Lie superalgebras of dimension 6}
\hspace{0.8cm} If $\dim(\g_\un) =0$ then the classification in the solvable case is given in Proposition \ref{prop1.19}. We only consider two non trivial cases:  $\dim(\g_\un) = 2$ and $\dim(\g_\un) = 4$ as follows.
\subsubsection{$\dim(\g_\un) = 2$.} 
We assume that $\g$ is indecomposable. By Lemma \ref{lem2.8}, $\g_\ze$ is non-Abelian. If $\g_\ze$ is not solvable then $\g_\ze = \sk\oplus \rk$ with $\sk$ semi-simple, $\rk$ the radical of $\g$ and $\sk\simeq \slk(2)$. Since $B([\sk,\sk^\bot],\sk) = B([\sk,\sk],\sk^\bot) = B(\sk,\sk^\bot) = 0$, then ${\ad}:\sk\rightarrow \End(\sk^\bot,\sk^\bot)$ is a one-dimensional representation of $\sk$. Hence, ${\ad}(\sk)|_{\sk^\bot} = 0$ and $[\sk^\bot,\sk]=\{0\}$. It implies that $\sk$ and $\sk^\bot$ are non-degenerate ideals of $\g_\ze$. Note that $\sk^\bot=\Zs(\g_\ze)$. By Proposition \ref{prop2.7}, $\Zs(\g)\cap\g_\ze\neq \{0\}$ so $\sk^\bot\subset \Zs(\g)$ and then $\g$ is decomposable. This is a contradiction. Therefore, $\g_\ze$ is solvable and i-isomorphic to the diamond Lie algebra $\g_4$ give in Example \ref{ex1.8}. Particularly, $\g_\ze={\spa}\{X,P,Q,Z\}$ such that $B(X,Z) = B(P,Q)=1$, $B(X,Q) = B(P,Z)=0$, the subspaces spanned by $\{X,P\}$ and $\{Q,Z\}$ are totally isotropic. The Lie bracket on $\g_\ze$ is defined by $[X,P] = P$, $[X,Q] = -Q$ and $[P,Q] = Z$. It is obvious that $(\Zs(\g)\cap\g_\ze)\subset\Zs(\g_\ze)$. By Proposition \ref{prop2.7}, one has $(\Zs(\g)\cap\g_\ze)=\CC Z$. 

If ${\ad}(X)|_{\g_\un} = 0$ then ${\ad}(P)|_{\g_\un} = {\ad}([X,P])|_{\g_\un} = [{\ad}(X),{\ad}(P)]|_{\g_\un} = 0$. Similarly, ${\ad}(Q)|_{\g_\un} = 0$ and then $[\g_\ze,\g_\un] = 0$. It implies $[\g_\un,\g_\un] = 0$ by the invariance of $B$. This is a contradiction since $\g$ is indecomposable. Hence, ${\ad}(X)|_{\g_\un} \neq 0$.

Note that ${\ad}(X)|_{\g_\un}$, ${\ad}(P)|_{\g_\un}$ and ${\ad}(Q)|_{\g_\un} \in\spk(\g_\un,B|_{\g_\un\times\g_\un}) = \spk(2)$, where
$$\spk(2) = {\spa}\left\{ \begin{pmatrix} 1 & 0 \\ 0 & -1\end{pmatrix}, \ \begin{pmatrix} 0 & 1 \\ 0 & 0\end{pmatrix}, \ \begin{pmatrix} 0 & 0 \\ 1 & 0\end{pmatrix}\right\}.$$
Set the subspace $V = {{\spa}}\{{\ad}(X)|_{\g_\un}, {\ad}(P)|_{\g_\un}, {\ad}(Q)|_{\g_\un}\}$ of $\spk(2)$. Assume that
$${\ad}(P)|_{\g_\un} = a \begin{pmatrix} 1 & 0 \\ 0 & -1\end{pmatrix}+b \begin{pmatrix} 0 & 1 \\ 0 & 0\end{pmatrix} + c\begin{pmatrix} 0 & 0 \\ 1 & 0\end{pmatrix}$$
$$\text{and}\ {\ad}(Q)|_{\g_\un} = a' \begin{pmatrix} 1 & 0 \\ 0 & -1\end{pmatrix}+b' \begin{pmatrix} 0 & 1 \\ 0 & 0\end{pmatrix} + c'\begin{pmatrix} 0 & 0 \\ 1 & 0\end{pmatrix}.$$
Since $[{\ad}(P)|_{\g_\un},{\ad}(Q)|_{\g_\un}] = {\ad}([P,Q])|_{\g_\un} = {\ad}(Z)|_{\g_\un}=0$, one has
$\left\{\begin{matrix} ab'-a'b=0 \\ ac'-a'c= 0  \\ bc'-b'c=0\end{matrix}\right.$.
That means ${\ad}(P)|_{\g_\un}$ and ${\ad}(Q)|_{\g_\un}$ not linearly independent and then $1\leq\dim(V)\leq 2$. We consider two following cases:

\begin{enumerate}
	\item If $\dim(V)=1$ then ${\ad}(P)|_{\g_\un} = \alpha{\ad}(X)|_{\g_\un}$ and ${\ad}(Q)|_{\g_\un} = \beta{\ad}(X)|_{\g_\un}$, where $\alpha,\ \beta\in\CC$. Since ${\ad}(P) = [{\ad}(X),{\ad}(P)]$ and ${\ad}(Q) = -[{\ad}(X),{\ad}(Q)]$, one has $\alpha=\beta=0$. As a consequence, we obtain $B([\g_\ze,\g_\ze],[\g_\un,\g_\un]) = B([[\g_\ze,\g_\ze],\g_\un],\g_\un) = 0$.
It implies $[\g_\un,\g_\un]\subset \CC Z$ since $\CC Z$ is the orthogonal complementary of $[\g_\ze,\g_\ze]$ in $\g_\ze$. It is easy to see that $\g$ is the double extension of the quadratic $\ZZ_2$-graded vector space $\qk=(\CC X\oplus\CC Z)^\bot$ of dimension 4 by the map $C={\ad}(X)|_{\qk}$. Write $\qk=\qk_\ze\oplus\qk_\un$ and then $C$ is a linear map in $\ok(\qk_\ze)\oplus\spk(\qk_\un)$ defined by:
	$$C=\begin{pmatrix} 1 & 0 & 0 & 0 \\ 0 & -1 & 0 & 0  \\ 0 & 0 & \lambda & a \\ 0 & 0 & b & -\lambda \end{pmatrix}	$$
	where $C_\ze =  \begin{pmatrix} 1 & 0 \\ 0 & -1 \end{pmatrix}\in\ok(\qk_\ze)$ and $C_\un =  \begin{pmatrix} \lambda & a \\ b & -\lambda \end{pmatrix}\in\spk(\qk_\un)$. 
	By \cite{DU12}, let $\g$ and $\g'$ be two double extension of $\qk$ by $C$ and $C'$ then $\g$ and $\g'$ is i-isomorphic if and only if $C_\ze$ and $\mu C'_\ze$ are on the same $\OO(\qk_\ze)$-orbit of $\ok(\qk_\ze)$, $ C_\un$ and $\mu C'_\un$ are on the same $\Sp(\qk_\un)$-orbit of $\spk(\qk_\un)$ for some nonzero $\mu\in\CC$. Therefore, we have only two non i-isomorphic families of Lie superalgebras corresponding to matrices
	$$\begin{pmatrix} 1 & 0 & 0 & 0 \\ 0 & -1 & 0 & 0  \\ 0 & 0 & 0 & 1 \\ 0 & 0 & 0 & 0 \end{pmatrix},\ \ \begin{pmatrix} 1 & 0 & 0 & 0 \\ 0 & -1 & 0 & 0  \\ 0 & 0 & \lambda & 0 \\ 0 & 0 & 0 & -\lambda \end{pmatrix},\ \lambda \neq 0.	$$
	As a consequence, we obtain Lie superalgebras as follows:
	
	\begin{itemize}
		\item[(i)] $\g_{6,1}^s$: $\g_\ze = \g_4$, $[X,Y_\un] = X_\un$, $[Y_\un,Y_\un]=Z$,
		\item[(ii)] $\g_{6,2}^s(\lambda)$: $\g_\ze = \g_4$, $[X,X_\un] = \lambda X_\un$, $[X,Y_\un] = -\lambda Y_\un$ and $[X_\un,Y_\un]=\lambda Z$,
	 \end{itemize}
	where $\g_\un={\spa}\{X_\un,Y_\un\}$ and $B(X_\un,Y_\un) = 1$. For (ii), $\g_{6,2}^s(\lambda_1)$ and $\g_{6,2}^s(\lambda_2)$ are i-isomorphic if and only if $\lambda_1=\lambda_2$. Moreover, two isomorphic and i-isomorphic notions are equivalent.
	\item If $\dim(V)=2$. Assume that ${\ad}(X)|_{\g_\un} = x {\ad}(P)|_{\g_\un} + y{\ad}(Q)|_{\g_\un}$, where $x,y\in\CC$, then ${\ad}(P)|_{\g_\un}=[{\ad}(X),{\ad}(P)]|_{\g_\un} = 0$.
	Similarly, ${\ad}(Q)|_{\g_\un}=0$. It implies that ${\ad}(X)|_{\g_\un} = 0$. This is a contradiction. Therefore, we can assume that ${\ad}(Q)|_{\g_\un} = x {\ad}(X)|_{\g_\un} + y {\ad}(P)|_{\g_\un}$ (since ${\ad}(P)|_{\g_\un}$ and ${\ad}(Q)|_{\g_\un}$ play the same roll). One has:
	$$0 = [{\ad}(P)|_{\g_\un},{\ad}(Q)|_{\g_\un}] = x[{\ad}(P)|_{\g_\un}, {\ad}(X)|_{\g_\un}] = -x{\ad}(P)|_{\g_\un}.$$
	Hence, $x = 0$ and ${\ad}(Q)|_{\g_\un} =y {\ad}(P)|_{\g_\un}$.
	
	On the other hand, we have ${\ad}(Q)|_{\g_\un}=[{\ad}(Q)|_{\g_\un},{\ad}(X)|_{\g_\un}] = -y{\ad}(P)|_{\g_\un}$. That means $y=0$ and then ${\ad}(Q)|_{\g_\un} = 0$.
	
	We need the following lemma. Its proof is lengthy but straightforward so we omit it.
\begin{lem}\label{lem1}
Let $A,B$ be two nonzero linear operators in $\spk(V)$, where $\dim(V) = 2$, such that $[A,B] = B$ then $A$ is semi-simple and $B$ is nilpotent.
\end{lem}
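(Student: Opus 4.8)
The plan is to exploit the isomorphism $\spk(V)\cong\spk(2)\cong\slk(2)$, so that $A$ and $B$ are nonzero traceless $2\times 2$ matrices, and to read the hypothesis $[A,B]=B$ as the assertion that $B$ is an eigenvector of the operator $\ad_A$ with eigenvalue $1$. The first thing I would record is the elementary dichotomy for traceless $2\times 2$ matrices: since the characteristic polynomial of any $X\in\slk(2)$ is $t^2+\det(X)$, such an $X$ is semi-simple when $\det(X)\neq 0$ (distinct eigenvalues $\pm\mu$ with $\mu\neq 0$) and nilpotent when $\det(X)=0$. Thus to prove that $A$ is semi-simple it suffices to exclude the nilpotent case.

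To exclude it, I would use that nilpotency of $A$ forces nilpotency of $\ad_A$: writing $\ad_A=L_A-R_A$ as the difference of the commuting left- and right-multiplication operators, both of which are nilpotent when $A$ is, one gets $\ad_A$ nilpotent, so $0$ is its only eigenvalue. This contradicts $\ad_A(B)=B$ with $B\neq 0$, which exhibits $1$ as an eigenvalue of $\ad_A$. Hence $A$ cannot be nilpotent, and by the dichotomy $A$ is semi-simple.

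For the nilpotency of $B$, I would pass to a symplectic basis in which the semi-simple element $A$ is diagonal, $A=\diag(\mu,-\mu)$ with $\mu\neq 0$; this is legitimate because the eigenvectors of $A$ for the opposite eigenvalues $\pm\mu$ span a symplectic basis after rescaling, and because nilpotency of $B$ is invariant under conjugation. A direct computation of $[A,B]$ for a general traceless $B=\begin{pmatrix} p & q \\ r & -p\end{pmatrix}$ gives $[A,B]=\begin{pmatrix} 0 & 2\mu q \\ -2\mu r & 0\end{pmatrix}$, so the equation $[A,B]=B$ immediately forces the diagonal entry $p=0$ together with the relations $(2\mu-1)q=0$ and $(2\mu+1)r=0$. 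Since $B\neq 0$ and $\mu\neq 0$, exactly one of $q,r$ survives, placing $B$ in a single root space; in either case $B^2=0$, so $B$ is nilpotent.

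The computations involved are short, so I do not expect a genuine obstacle; the one point that deserves care is the reduction to a symplectic basis for $A$, i.e. checking that a semi-simple element of $\spk(2)$ really is $\Sp(V)$-conjugate to a diagonal matrix, so that the final matrix computation may be carried out without loss of generality. Everything else is the standard $\slk(2)$ root-space bookkeeping.
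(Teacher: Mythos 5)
Your proof is correct and complete, and there is nothing in the paper to compare it against: the authors explicitly omit the argument for this lemma (``Its proof is lengthy but straightforward so we omit it''), so what you wrote supplies the missing proof rather than replicating or diverging from one. Your structure is the natural one: the semi-simple/nilpotent dichotomy for nonzero elements of $\slk(2)$ via the characteristic polynomial $t^2+\det X$; the exclusion of nilpotent $A$ because ${\ad}_A = L_A - R_A$ is then a difference of commuting nilpotent operators, hence nilpotent, contradicting ${\ad}_A(B)=B$ with $B\neq 0$; and the root-space computation giving $p=0$, $(2\mu-1)q=0$, $(2\mu+1)r=0$, so that $B$ sits in a single root space and $B^2=0$. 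Two small remarks. First, the step you flag as delicate --- diagonalizing $A$ by a \emph{symplectic} change of basis --- is immediate in dimension $2$: every vector is isotropic, so the two eigenlines of $A$ must pair non-degenerately under the form (otherwise one of them would lie in the radical), and rescaling one eigenvector produces a symplectic eigenbasis; alternatively, in dimension $2$ one has $\spk(V)=\slk(V)$ as sets of traceless matrices and $\Sp(V)=\operatorname{SL}(V)$, so any determinant-one conjugation keeps everything in place. Second, your computation shows more than nilpotency of $B$: it forces $\mu=\pm\frac{1}{2}$, which is exactly the normal form ${\ad}(X)|_{\g_\un}=\diag\left(\frac{1}{2},-\frac{1}{2}\right)$ with ${\ad}(P)|_{\g_\un}$ strictly upper triangular that the paper invokes immediately after the lemma, so your argument also justifies that subsequent step.
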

%
%

Apply this lemma, we choose a basis $\{X_\un, Y_\un\}$ of $\g_\un$ such that $B(X_\un, Y_\un) = 1$ then ${\ad}(X)|_{\g_\un} = \begin{pmatrix} \frac{1}{2} & 0 \\ 0 & -\frac{1}{2}\end{pmatrix}$ and ${\ad}(P)|_{\g_\un} =\begin{pmatrix} 0 & \mu \\ 0 & 0\end{pmatrix}$ where $\mu\neq 0$.

By the invariancy of $B$, we have $[X_\un, Y_\un] = \frac{1}{2}Z$, $[Y_\un, Y_\un] = \mu Q$. Set $P':=\frac{P}{\mu}$ and $Q':=\mu Q$ then we have the following Lie superalgebra $\g_{6,3}^s$: $\g_\ze=\g_4$, ${\ad}(X)|_{\g_\un} = \begin{pmatrix} \frac{1}{2} & 0 \\ 0 & -\frac{1}{2}\end{pmatrix}$, ${\ad}(P)|_{\g_\un} =\begin{pmatrix} 0 & 1 \\ 0 & 0\end{pmatrix}$, $[X_\un, Y_\un] = \frac{1}{2}Z$ and $[Y_\un, Y_\un] = Q$.
\end{enumerate}

\subsubsection{$\dim(\g_\un) = 4$.} 
In this case, $\dim(\g_\ze) = 2$ and if $\g$ is non-Abelian then $\g$ is a double extension \cite{DU12}. We can choose a basis $\{X_\ze,Y_\ze\}$ of $\g_\ze$ and a canonical basis $\{X_1,X_2,Y_1,Y_2\}$ of $\g_\un$ such that $B(X_i,X_j) = B(Y_i,Y_j) = 0$, $B(X_i,Y_j) = \delta_{ij}$, $i,j = 1,2$ such that $X_\ze\in\Zs(\g)$ and $C:={\ad}(Y_\ze)|_{\g_\un}\in\spk(\g_\un,B|_{\g_\un\times\g_\un}) = \spk(4)$. Moreover, the isomorphic classification of $\g$ reduces to the classification of $\Sp(\g_\un)$-orbits of $\ps(\spk(\g_\un))$. In particular, if $C$ is nilpotent then:
	
	\begin{itemize}
		\item $C = \begin{pmatrix} 0 & 1 & 0 & 0 \\ 0 & 0 & 0 & 0 \\ 0 &  0 & 0 & 0\\ 0 & 0 & -1 & 0\end{pmatrix} $
that is corresponding to the partition $[2^2]$ of $4$ (see detail in \cite{CM93}).
\end{itemize}

If $C$ is not nilpotent. Apply the classification of $\Sp(4)$-orbits of $\ps(\spk(4))$ in \cite{DU12} one has following cases:
\begin{itemize}
\item $C = \begin{pmatrix} 0 & 0 & 1 & 0 \\ 0 & 1 & 0 & 0 \\ 0 &  0 & 0 & 0\\ 0 & 0 & 0 & -1\end{pmatrix},$
\item $C = \begin{pmatrix} 1 & 0 & 0 & 0 \\ 0 & \lambda & 0 & 0 \\ 0 &  0 & -1 & 0\\ 0 & 0 & 0 & -\lambda\end{pmatrix}, \ \lambda\neq 0,$
\item $C = \begin{pmatrix} 1 & 1 & 0 & 0 \\ 0 & 1 & 0 & 0 \\ 0 &  0 & -1 & 0\\ 0 & 0 & -1 & -1\end{pmatrix}$
	\end{itemize}
	in a canonical basis $\{X_1,X_2,Y_1,Y_2\}$ of $\g_\un$.

We obtain corresponding Lie superalgebras:
\begin{itemize}
	\item $\g_{6,4}^s$: $[Y_\ze,X_2] = X_1$, $[Y_\ze,Y_1] = -Y_2$ and $[X_2,Y_1] = X_\ze$.
	\item $\g_{6,5}^s$: $[Y_\ze,X_2] = X_2$, $[Y_\ze,Y_1] = X_1$, $[Y_\ze,Y_2] = -Y_2$ and $[Y_1,Y_1] = [X_2,Y_2] = X_\ze$.
	\item $\g_{6,6}^s(\lambda)$: $[Y_\ze,X_1] = X_1$, $[Y_\ze,X_2] = \lambda X_2 $, $[Y_\ze,Y_1] = - Y_1 $, $[Y_\ze,Y_2] = - \lambda Y_2$, $[X_1,Y_1] =X_\ze$ and $[X_2,Y_2] = \lambda X_\ze$. In this case, $\g_{6,6}^s(\lambda_1)$ is i-isomorphic to $\g_{6,6}^s(\lambda_2)$ if and only if there exists $\mu\in\CC$ nonzero such that $C(\lambda_1)$ is in the $\Sp(\g_\un)$-adjoint orbit through $\mu C(\lambda_2)$. That happens if and only if $\lambda_1=\pm\lambda_2$ or $\lambda_2=\pm{\lambda_1}^{-1}$.
	\item $\g_{6,7}^s$: $[Y_\ze,X_1] = X_1$, $[Y_\ze,X_2] =  X_2 + X_1$, $[Y_\ze,Y_1] = - Y_1 -Y_2$, $[Y_\ze,Y_2] = -Y_1$ and $[X_1,Y_1] = [X_2,Y_1] = [X_2,Y_2] = X_\ze$.
\end{itemize}
\section{Odd quadratic Lie superalgebras}
\begin{defn}
Let $\g=\g_\ze\oplus\g_\un$ be a Lie superalgebra. If there is a non-degenerate supersymmetric bilinear form $B$ on $\g$ such that $B$ is odd and invariant then the pair $(\g,B)$ is called an {\em odd-quadratic Lie superalgebra}.
\end{defn}

We have the following proposition.
\begin{prop}\label{prop3.2}
Let $\g$ be a Lie algebra and $\phi:\g^*\times\g^*\rightarrow\g$ a symmetric bilinear map satisfying two conditions:
\begin{enumerate}
	\item ${\ad}(X)\left( \phi(f,g)\right)+\phi\left(f,g\circ{\ad}(X) \right)+\phi\left(g,f\circ{\ad}(X) \right) = 0$,
	\item $f\circ{\ad}\left( \phi(g,h)\right) + \ \ cycle(f,g,h)\ \ = 0$
	for all $X\in\g$ and $f,g\in\g^*$.
\end{enumerate}
Then the vector space $\gb=\g\oplus\g^*$ with the bracket
$$
[X+f,Y+g]=[X,Y]+{\ad}^*(X)(g)-{\ad}^*(Y)(f)+\phi(f,g)
$$
for all $X,Y\in\g$, $f,g\in\g^*$ is a Lie superalgebra and it is called the {\em $T_s^*$-extension of $\g$ by means of $\phi$}. Moreover, if $\phi$ satisfies the {\bf cyclic} condition: $h\left(\phi(f,g) \right) = f\left(\phi(g,h) \right)$ for all $f,\ g,\ h\in\g^*$ then $\g$ is odd-quadratic with the bilinear form 
$$
\overline{B}(X+f,Y+g)=f(Y)+g(X),\ \forall X,Y\in\g,\ f,g\in\g^*.
$$
\end{prop}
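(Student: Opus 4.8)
The plan is to fix the $\ZZ_2$-grading first, then verify the graded Jacobi identity case by case, and finally check that $\overline{B}$ has the four required properties. The grading is forced: since the $\phi$-term carries $\g^*\times\g^*$ into $\g$, the only consistent choice is $\gb_\ze=\g$ and $\gb_\un=\g^*$. With this grading the bracket has the correct parity automatically — even$\times$even and even$\times$odd are governed by $[\cdot,\cdot]_\g$ and $\ad^*$, while odd$\times$odd is governed by $\phi$ — and super-antisymmetry $[a,b]=-(-1)^{|a||b|}[b,a]$ holds; its only nontrivial instance is the odd$\times$odd case, where it demands $[f,g]=[g,f]$, i.e. the symmetry of $\phi$, which is hypothesized.

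The heart of the argument is the graded Jacobi identity, which I would organize by the number of odd arguments. For three even arguments it is the ordinary Jacobi identity of $\g$; for two even and one odd it reduces to the fact that the coadjoint action is a representation, $\ad^*([X,Y])=[\ad^*(X),\ad^*(Y)]$, with no $\phi$-terms entering. The genuinely super cases are the remaining two. For one even and two odd arguments $(X,f,g)$, expanding with $[f,g]=\phi(f,g)$, $[X,g]=\ad^*(X)(g)$ and the convention $\ad^*(X)(g)=-g\circ\ad(X)$ collapses the three terms of the graded Jacobi identity exactly into the left-hand side of hypothesis (1). For three odd arguments $(f,g,h)$ the same substitutions, together with $[f,\phi(g,h)]=f\circ\ad(\phi(g,h))$, reproduce (up to the overall Koszul sign) hypothesis (2). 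Thus conditions (1) and (2) are precisely the two super parts of the Jacobi identity, and the Lie-superalgebra claim follows.

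For the odd-quadratic structure, the form $\overline{B}(X+f,Y+g)=f(Y)+g(X)$ is manifestly odd, since it annihilates $\g\times\g$ and $\g^*\times\g^*$ and pairs $\g$ with $\g^*$; it is supersymmetric and non-degenerate because it restricts to the canonical dual pairing. The only substantive point is invariance $\overline{B}([a,b],c)=\overline{B}(a,[b,c])$, which I would check by grading. Both sides vanish unless the total parity $|a|+|b|+|c|$ is odd, leaving four combinations. The three with a single odd argument all reduce to the identity $(\ad^*(X)(g))(Y)=-g([X,Y])$ together with antisymmetry of $[\cdot,\cdot]_\g$; the combination of three odd arguments reduces to $h(\phi(f,g))=f(\phi(g,h))$, which is exactly the cyclic condition.

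The main obstacle is purely bookkeeping: carrying the Koszul signs through the graded Jacobi identity and fixing the coadjoint convention $\ad^*(X)(g)=-g\circ\ad(X)$ consistently, so that hypotheses (1), (2) and the cyclic condition align exactly with the (even,odd,odd) and (odd,odd,odd) Jacobi cases and the (odd,odd,odd) invariance case. Once the conventions are pinned down each identity is a direct substitution with no content beyond the stated hypotheses, which is presumably why the author elects to omit the computation.
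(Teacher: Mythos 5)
Your proposal is correct, and it is exactly the verification the paper has in mind: the paper states Proposition \ref{prop3.2} without proof, treating it as a routine (if lengthy) check, and your case-by-case analysis — grading forced by where $\phi$ lands, symmetry of $\phi$ giving super-antisymmetry, hypotheses (1) and (2) matching the (even, odd, odd) and (odd, odd, odd) instances of the graded Jacobi identity, and the cyclic condition matching the all-odd instance of invariance of $\overline{B}$ — fills in precisely that omitted computation with the right sign conventions.
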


\begin{ex} Let $\g$ be a two-dimensional Lie algebra spanned by $\{X,Y\}$ with $[X,Y]=Y$. Assume $\phi$ is a symmetric bilinear form satisfying the conditions as in Proposition \ref{prop3.2}. It is easy to compute that in this case $\phi$ must be zero and then the $T_s^*$-extension of $\g$ coincides with the semidirect product of $\g$ and $\g^*$ with the Lie bracket defined by $[X,Y] = Y$, $[X,Y^*]=-Y^*$ and $[Y,Y^*]=X^*$. This is regarded as a "superalgebra" type of the diamond Lie algebra.
\end{ex}
\begin{ex}
If $\g$ spanned by $\{X,Y\}$ is the two-dimensional Abelian Lie algebra. We can check easily that every $\phi$ having the above properties must be defined as follows.
$$
\phi(X^*,X^*)=\alpha X+\beta Y,\ \phi(X^*,Y^*)=\beta X+\gamma Y,\ \ \phi(Y^*,Y^*)=\gamma X+\lambda Y
$$
where $\alpha,\ \beta,\ \gamma,\ \lambda\in\CC$. In this case, the $T_s^*$-extension of $\g$ by means of $\phi$ is defined by $[X^*,X^*] = \alpha X+\beta Y$, $[X^*,Y^*]=\beta X+\gamma Y$ and $[Y^*,Y^*]=\gamma X+\lambda Y$. Really, this is a "Lie superalgebra" type of 2-nilpotent commutative algebras of dimension 4. Such algebras will be completely classified in the next.
\end{ex}
\begin{prop}
Let $\g$ be an odd-quadratic Lie superalgebra then $\g$ is i-isomorphic to a $T_s^*$-extension of $\g_\ze$.
\end{prop}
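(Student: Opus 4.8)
The plan is to exploit the odd, non-degenerate pairing between $\gO$ and $\gI$ to identify $\gI$ with the dual space $\gO^*$, and then to read off the $T_s^*$-extension data directly from the bracket of $\g$. First I would observe that, since $B$ is odd and non-degenerate, the bilinear map $\gI\times\gO\to\CC$, $(Y,Z)\mapsto B(Y,Z)$, is a perfect pairing, so that the linear map $\io:\gI\to\gO^*$, $\io(Y)=B(Y,\cdot)|_{\gO}$, is an isomorphism. Since $\gO$ is the even part it is a Lie algebra, and I take it as the base of the $T_s^*$-extension. The candidate i-isomorphism is $A:\g\to T_s^*(\gO)=\gO\oplus\gO^*$, $A(X+Y)=X+\io(Y)$ for $X\in\gO$, $Y\in\gI$. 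That $A$ is an isometry is immediate from oddness of $B$: since $B(X,X')=B(Y,Y')=0$, one gets $B(X+Y,X'+Y')=B(X,Y')+B(Y,X')=\io(Y')(X)+\io(Y)(X')$, which is exactly $\overline{B}(A(X+Y),A(X'+Y'))$.

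Next I would check that, under $\io$, the three homogeneous components of the bracket match the three terms of the $T_s^*$-extension bracket. The even-even bracket satisfies $[\gO,\gO]\subseteq\gO$ by grading, so it is unchanged and supplies the $[X,Y]_{\gO}$ term (in particular no additional cocycle $\theta$ is needed). For the even-odd bracket, using graded antisymmetry $[X,Y]=-[Y,X]$ and invariance, $\io([X,Y])(Z)=B([X,Y],Z)=-B(Y,[X,Z])=-\io(Y)([X,Z])$, so $\io([X,Y])=\ad^*(X)(\io(Y))$ for the convention $(\ad^*(X)f)(Z)=-f([X,Z])$ used in the paper. Finally the odd-odd bracket $[\gI,\gI]\subseteq\gO$ defines $\phi:\gO^*\times\gO^*\to\gO$ by $\phi(\io(Y),\io(Y'))=[Y,Y']$; graded antisymmetry on odd elements gives $[Y,Y']=[Y',Y]$, so $\phi$ is symmetric, as required.

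It then remains to verify that $\phi$ satisfies conditions (1), (2) and the cyclic condition of Proposition \ref{prop3.2}. Condition (1) comes from the Jacobi identity $[X,[Y,Y']]=[[X,Y],Y']+[Y,[X,Y']]$ for $X\in\gO$, $Y,Y'\in\gI$: translating each term through $\io$, using $\ad^*(X)(f)=-f\circ\ad(X)$ and the symmetry of $\phi$, converts it into $\ad(X)(\phi(f,g))+\phi(f,g\circ\ad(X))+\phi(g,f\circ\ad(X))=0$. Condition (2) comes from the graded Jacobi identity for three odd elements, $[Y,[Y',Y'']]+[Y',[Y'',Y]]+[Y'',[Y,Y']]=0$; applying $\io$ and rewriting $\io([W,Y])$ via the coadjoint action turns the cyclic sum into $f\circ\ad(\phi(g,h))+cycle(f,g,h)=0$. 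The cyclic condition $h(\phi(f,g))=f(\phi(g,h))$ is just invariance of $B$: with $Y,Y',Y''$ corresponding to $f,g,h$, supersymmetry and invariance give $B(Y'',[Y,Y'])=B([Y,Y'],Y'')=B(Y,[Y',Y''])$.

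I expect the main obstacle to be the bookkeeping of the $\ZZ_2$-signs in the graded Jacobi identity and in the supersymmetry of $B$, since each $\io$-translation introduces a sign that must be tracked so as to land precisely on the stated (sign-specific) conditions. The conceptual content, however, is entirely contained in the identification $\gI\cong\gO^*$ furnished by the odd form, together with the observation that invariance and the graded Jacobi identity are exactly the super-analogues of the cyclicity and cocycle conditions defining a $T_s^*$-extension.
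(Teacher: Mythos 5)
Your proposal is correct and follows exactly the paper's route: identify $\g_\un$ with $\g_\ze^*$ via the odd form $B$, take $\phi$ to be the odd-odd bracket, and verify that invariance plus the graded Jacobi identity yield the conditions of Proposition \ref{prop3.2}. The only difference is that the paper compresses all the verifications into ``it is easy to check,'' whereas you carry them out explicitly (correctly, including the sign conventions for $\ad^*$ and the cyclic condition).
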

\begin{proof}
Assume $\g=\g_\ze\oplus \g_\un$ is an odd-quadratic Lie superalgebra with the bilinear form $B$. We can identify $\g_\un$ with $\g_\ze^*$ by the bilinear form $B$. Set the symmetric bilinear map $\phi:\g_\un\times\g_\un\rightarrow\g_\ze$ by $\phi(f,g):=[f,g]$ for all $f,\ g\in \g_\un$ then it is easy to check that $\phi$ satisfies the conditions in Proposition \ref{prop3.2} and $\g$ is the $T_s^*$-extension of $\g_\ze$ by means of $\phi$.
\end{proof}

\subsection{Odd quadratic Lie superalgebras of dimension 2}
\hspace{0.8cm}We recall the classification of odd quadratic Lie superalgebras of dimension 2 in \cite{ABB10} as follows. Let $\g=\g_\ze\oplus\g_\un$, where $\g_\ze=\CC X_\ze$ and $\g_\un=\CC X_\un$. Define an odd bilinear form on $\g$ by $B(X_\ze,X_\ze) = B(X_\un,X_\un)=0,\ B(X_\ze,X_\un) = 1.
$
Then $\g$ is Abelian or isomorphic to $\g^o_2(\lambda)$, where the nonzero Lie bracket on $\g^o_2(\lambda)$ is given by: $[X_\un,X_\un] = \lambda X_\un$. 

By a straightforward checking, we can see that $\g^o_2(1)$ is isomorphic to $\g^o_2(\lambda)$. Moreover, they are also i-isomorphic by the following i-isomorphism $A(X_\ze)=\lambda^{\frac{1}{3}}X_\ze$ and $A(X_\un)=\lambda^{-\frac{1}{3}}X_\un$. Really, these algebras were classified in \cite{DU10}, Example 3.18 in term of $T^*$-extension of the one-dimensional Abelian algebra.
\subsection{Odd quadratic Lie superalgebras of dimension 4}
\hspace{0.8cm}Now, let $\g=\g_\ze\oplus\g_\un$ be an odd quadratic Lie superalgebra, where $\g_\ze=\CC X_\ze\oplus\CC Y_\ze$ and $\g_\un=\CC X_\un\oplus\CC Y_\un$. If $\g_\ze$ is Abelian then $[\g_\ze,\g_\un] = \{0\}$ by the invariance of $B$. This case have just been classified in \cite{DU10}. In particular, if $\g$ is non-Abelian then it is i-isomorphic to each of algebras as follows:

\begin{itemize}
	\item $\g^o_{4,1}:\ [X_\un,X_\un] = Y_\ze,\ [X_\un,Y_\un]=X_\ze$,
	\item $\g^o_{4,2}:\ [X_\un,X_\un] = Y_\ze,\ [X_\un,Y_\un]=X_\ze+Y_\ze$ and $[Y_\un,Y_\un]=X_\ze$.
\end{itemize}
Note that $\g^o_{4,1}$ and $\g^o_{4,2}$ are not isomorphic.

If $\g_\ze$ is non-Abelian then we can assume $[X_\ze,Y_\ze]=Y_\ze$. By $B(X_\ze,[X_\ze,\g_\un])=0$, $[X_\ze,\g_\un]\subset\CC Y_\un$. Moreover, $B(X_\un,[\g_\ze,\g_\ze]=0$ implies that $[X_\un,\g_\ze]=\{0\}$. If $[X_\ze,Y_\un]=\alpha Y_\un$ then $\alpha = B(Y_\ze,[X_\ze,Y_\un]) = B([Y_\ze,X_\ze],Y_\un) = -1$. Similarly, one obtains $[Y_\ze,Y_\un]=X_\un$.

Now, we continue considering the Lie bracket on $\g_\un\times \g_\un$ as follows. Assume that $[X_\un,X_\un]= \beta_1 X_\ze+\gamma_1 Y_\ze$, By the Jacobi identity and by $[X_\un,\g_\ze]=\{0\}$, $[X_0,[X_\un,X_\un]] = 0$ and $[Y_0,[X_\un,X_\un]] = 0$. Therefore, $\beta_1=\gamma_1 = 0$. Similarly, $[X_\un,Y_\un]= [X_\un,Y_\un]]=0$.

Finally, we obtain the nonzero Lie bracket on $\g$ when $\g_\ze$ is non-Abelian that $[X_\ze,Y_\ze]=Y_\ze$, $[X_\ze,Y_\un]=-Y_\un$  and $[Y_\ze,Y_\un]=X_\un$. It is easy to see that $\g$ is the diamond Lie algebra $\g_4$.

\subsection{Solvable odd quadratic Lie superalgebras of dimension 6}
\hspace{0.8cm}Let $\g$ be a solvable odd quadratic Lie superalgebra of dimension 6, where $\g_\ze={\spa}\{X_\ze,Y_\ze,Y_\ze\}$ and $\g_\un={\spa}\{X_\un,Y_\un,Z_\un\}$ such that $B(X_\ze,X_\un)=B(Y_\ze,Y_\un)=B(Z_\ze,Z_\un) = 1$, the others are zero. If $\g_\ze$ is Abelian then $[\g_\ze,\g_\un]=\{0\}$. In this case, $\g$ can be seen as a commutative algebra and the classification can be reduced to the classification of ternary cubic forms \cite{DU10}. We will consider the case of $\g_\ze$ non-Abelian. Recall here three complex solvable Lie algebras of dimension 3 as follows:

\begin{itemize}
	\item $\g_{3,1}:\ [X_\ze,Y_\ze] = Z_\ze$,
	\item $\g_{3,2}:\ [X_\ze,Y_\ze] = Y_\ze,\ [X_\ze,Z_\ze] = Y_\ze+Z_\ze$,
	\item $\g_{3,3}:\ [X_\ze,Y_\ze] = Y_\ze,\ [X_\ze,Z_\ze] = \mu Z_\ze$, where $\left|\mu\right|\leq 1$.
\end{itemize}

\subsubsection{Case 1: $\g_\ze=\g_{3,1}$.}
Since $[Z_\ze,\g_\ze]=\{0\}$, one has $[Z_\ze,\g_\un]=\{0\}$ and $[\g_\ze,\g_\un]\subset \CC X_\un\oplus\CC Y_\un$ by the invariance of the bilinear form $B$. Moreover, $B([\g_\ze,\g_\ze],\CC X_\un\oplus\CC Y_\un) = 0$ implies that $[\g_\ze,\CC X_\un\oplus\CC Y_\un]=\{0\}$. As a consequence of the Jacobi identity, one has
$$ [\CC X_\un\oplus\CC Y_\un,\CC X_\un\oplus\CC Y_\un]\subset \Zs(\g_\ze) = \CC Z_\ze. $$

By a straightforward computation, $[Y_\ze,Z_\un] = X_\un$ and $[X_\ze,Z_\un] = -Y_\un$. Now, we assume that $[ X_\un, X_\un] = \alpha Z_\ze$,  $[ X_\un, Y_\un] = \beta Z_\ze$, $[ Y_\un, Y_\un] = \gamma Z_\ze$ and $ [ Z_\un, Z_\un] = xX_\ze+yY_\ze+zZ_\ze$. By the Jacobi identity, one has $[X_\ze,[ Z_\un, Z_\un]] + [Z_\un,[Z_\un,X_\ze]] -[Z_\un,[X_\ze,Z_\un]] = 0$.
Therefore, $[ Z_\un, Y_\un]=-\frac{y}{2}Z_\ze$. Similarly, $[ Z_\un, X_\un]=-\frac{x}{2}Z_\ze$. By the invariance of $B$, we obtain $\alpha=\beta=\gamma = 0$ and $x=y=0$.

It results that $\ [X_\ze,Y_\ze] = Z_\ze$, $[Y_\ze,Z_\un] = X_\un$, $[X_\ze,Z_\un] = -Y_\un$ and $[ Z_\un, Z_\un] = \lambda Z_\ze$.
\begin{rem} If $\lambda=0$ then the Lie superalgebra $\g$ is the six-dimensional quadratic Lie algebra $\g_{6,1}$ given in Proposition \ref{prop1.19}. It is easy to check that if $\lambda\neq 0$ then $\g(\lambda)$ is isomorphic to $\g(1)$ by the following isomorphism $A$:
$$A(X_\ze)= X_\ze,\ A\left(\frac{Y_\ze}{\lambda}\right)=Y_\ze, \ A\left(\frac{Z_\ze}{\lambda}\right)=Z_\ze, \ A\left(\frac{X_\un}{\lambda^2}\right)=X_\un, \ A\left(\frac{Y_\un}{\lambda}\right)=Y_\un, \ A\left(\frac{Z_\un}{\lambda}\right)=Z_\un.$$

However, we have a stronger result that $\g(\lambda)$ is i-isomorphic to $\g(\lambda')$ by the i-isomorphism
defined by $C(X_\ze)= aX_\ze+Y_\ze$, $C(Y_\ze)=aX_\ze+2Y_\ze$, $C(Z_\ze)=aZ_\ze$, $C(X_\un)=\frac{2}{a}X_\un-Y_\un$, $C(Y_\un)=-\frac{1}{a}X_\un+Y_\un$ and $C(Z_\un)=\frac{1}{a}Z_\un$ where $a=\left({\frac{\lambda'}{\lambda}}\right)^{1/3}$.
\end{rem}

\subsubsection{Case 2: $\g_\ze=\g_{3,2}$.}
By the invariance of $B$, one has $[\g_\ze,X_\un] = \{0\}$, $[X_\ze,Y_\un]=-Y_\un-Z_\un$, $[Y_\ze,Y_\un]=X_\un$, $[Z_\ze,Y_\un]=X_\un$, $[X_\ze,Z_\un]=-Z_\un$, $[Y_\ze,Z_\un]=0$ and $[Z_\ze,Z_\un]=X_\un$. 

By the Jacobi identity and $[\g_\ze,X_\un] = \{0\}$, one has $[X_\un,X_\un]=0$. Assume that $[X_\un,Y_\un]=aX_\ze+bY_\ze+cZ_\ze$. By a straightforward computation, $[Y_\ze,[X_\un,Y_\un]] + [X_\un,[Y_\un,Y_\ze]] - [Y_\un,[Y_\ze,X_\un]] = 0$ implies $a=0$ and $[X_\ze,[X_\un,Y_\un]] + [X_\un,[Y_\un,X_\ze]] - [Y_\un,[X_\ze,X_\un]] = 0$ implies $[X_\un,Z_\un]=-(2b+c)Y_\ze-2cZ_\ze$. As a consequence, $[X_\ze,[X_\un,Z_\un]] + [X_\un,[Z_\un,X_\ze]] - [Z_\un,[X_\ze,X_\un]] = 0$ reduces to $-4(b+c)Y_\ze-4cZ_\ze=0$. Thus, $b=c=0$.

By a similar way using the Jacobi identity, one has 
$[\g_\un,\g_\un]=\{0\}$.

\subsubsection{Case 3: $\g_\ze=\g_{3,3}$.}

Since $B([\g_\ze,\g_\ze],X_\un) = 0$, one has $[\g_\ze,X_\un]=\{0\}$. By the invariance of $B$, it is easy to obtain the nonzero Lie brackets on $\g_\ze\times\g_\un$ as follows:
$$[X_\ze,Y_\un]=-Y_\un,\ [Y_\ze,Y_\un]=X_\un, \ [X_\ze,Z_\un]=-\mu Z_\un\ \text{and}\ [Z_\ze,Z_\un]=\mu X_\un.$$
\begin{enumerate}
	\item If $\mu=0$ then $[\g_\ze,\CC X_\un\oplus \CC Z_\un]=\{0\}$. By the Jacobi identity, $[\CC X_\un\oplus \CC Z_\un,\CC X_\un\oplus \CC Z_\un]\subset \Zs(\g_\ze) = \CC Z_\ze$. We can assume that 
	$[ X_\un, X_\un] = \alpha Z_\ze$,  $[ X_\un, Z_\un] = \beta Z_\ze$, $[ Z_\un, Z_\un] = \gamma Z_\ze$ and $ [ Y_\un, Y_\un] = xX_\ze+yY_\ze+zZ_\ze$.
	Since $[X_\ze,[Y_\un,Y_\un]] + [Y_\un,[Y_\un,X_\ze]] - [Y_\un,[X_\ze,Y_\un]] = 0$, one has $ [ Y_\un, Y_\un] = 0$. As a consequence, $[Y_\ze,[Y_\un,Y_\un]] + [Y_\un,[Y_\un,Y_\ze]] - [Y_\un,[Y_\ze,Y_\un]] = 0$ implies 
 $[X_\un,Y_\un]=0$.
Similarly, by using the Jacobi identity for $X_\ze$, $Z_\un$ and $Y_\un$, we obtain 
$[ Z_\un, Y_\un] = 0$.

Moreover, $[Y_\ze,[Y_\un,Z_\un]] + [Y_\un,[Z_\un,Y_\ze]] -[Z_\un,[Y_\ze,Y_\un]]= 0$ and $[X_\un,[Y_\ze,Y_\un]] + [Y_\ze,[Y_\un,X_\un]] + [Y_\un,[Z_\un,Y_\ze]] = 0$ imply that $\alpha=\beta = 0$. Summarily, $$[X_\ze,Y_\ze]=Y_\ze, \ [X_\ze,Y_\un]=-Y_\un,\ [Y_\ze,Y_\un]=X_\un \ \text{and}\ [ Z_\un, Z_\un] = \gamma Z_\ze.$$
In this case, $\g = \g^o_{4,3}\oplusp\g^o_2(\gamma)$ decomposable.
\item If $\mu\neq 0$. By the Jacobi identity and $[\g_\ze,X_\un]=\{0\}$, one has $[X_\un,X_\un] = 0$. Assume $ [X_\un, Y_\un] = aX_\ze+bY_\ze+cZ_\ze$ then $[X_\ze,[X_\un,Y_\un]] + [X_\un,[Y_\un,X_\ze]] - [Y_\un,[X_\ze,X_\un]] = 0$
implies $aX_\ze+2bY_\ze+c(\mu+1)Z_\ze = 0$. Therefore, $a = b = 0$ and $c(\mu+1)=0$. Also, $[Y_\ze,[Y_\un,Y_\un]] + [Y_\un,[Y_\un,Y_\ze]] -[Y_\un,[Y_\ze,Y_\un]] = 0$ reduces to $[Y_\ze,[Y_\un,Y_\un]] = 2c Z_\ze$. Combining this with $[Y_\ze,\g_\ze]=\CC Y_\ze$, we obtain $c=0$ and then $[Y_\un,Y_\un] = x Y_\ze + y Z_\ze$. Similarly, 
$[X_\ze,[Y_\un,Y_\un]] + [Y_\un,[Y_\un,X_\ze]] - [Y_\un,[X_\ze,Y_\un]] = 0$
implies $3xY_\ze+y(\mu+2)Z_\ze = 0$. Since $\left|\mu\right|\leq 1$, we get $x=y=0$.

Assume $[Z_\un,Z_\un] = u X_\ze + v Y_\ze + wZ_\ze$. Since $[X_\ze,[Z_\un,Z_\un]] + [Z_\un,[Z_\un,X_\ze]] - [Z_\un,[X_\ze,Z_\un]] = 0$,
we have $u=w=0$ and $v(1+2\mu)=0$. 

If $[Z_\un,Y_\un] = \alpha X_\ze + \beta Y_\ze + \gamma Z_\ze$ then by applying the Jacobi identity for $X_\ze$, $Y_\un$ and $Z_\un$ we obtain 
$(\mu+1)\alpha = (\mu+2)\beta = \gamma(2\mu+1) = 0 $. Therefore, $\beta = 0$ since $\left|\mu\right|\leq 1$.

By the Jacobi identity for $Y_\ze$, $Y_\un$ and $Z_\un$ we get 
$[Z_\un,X_\un]=-\alpha Y_\ze$ and then $[Z_\ze,[Z_\un,Z_\un]] + [Z_\un,[Z_\un,Z_\ze]] - [Z_\un,[Z_\ze,Z_\un]] = 0$
reduces to $\alpha = 0$. Since
$[Z_\un,[Z_\un,Y_\un]] + [Z_\un,[Y_\un,Z_\un]] + [Y_\un,[Z_\un,Z_\un]] = 0$,
one has $v = -2\gamma\mu$. Combining this with $\gamma(2\mu+1) =0$, one has $v=\gamma$. Therefore, 
$[Z_\un,Z_\un] = \gamma Y_\ze$ and $[Z_\un,Y_\un] = \gamma Z_\ze $ where $\gamma(2\mu+1)=0$. 

We have two following cases:
\begin{itemize}
	\item If $\gamma = 0$ then the nonzero Lie bracket is defined $\g$ by $[X_\ze,Y_\ze] = Y_\ze$, $[X_\ze,Z_\ze] = \mu Z_\ze$, $[X_\ze,Y_\un]=-Y_\un$, $[Y_\ze,Y_\un]=X_\un$, $[X_\ze,Z_\un]=-\mu Z_\un$ and $[Z_\ze,Z_\un]=\mu X_\un$.
	\item If $\gamma\neq0$ then $\mu=-\frac{1}{2}$ and $[X_\ze,Y_\ze] = Y_\ze$, $[X_\ze,Z_\ze] = -\frac{1}{2} Z_\ze$, $[X_\ze,Y_\un]=-Y_\un$, $[Y_\ze,Y_\un]=X_\un$, $[X_\ze,Z_\un]=\frac{1}{2} Z_\un$, $[Z_\ze,Z_\un]=-\frac{1}{2} X_\un$, $[Z_\un,Z_\un] = \gamma Y_\ze, \ [Z_\un,Y_\un] = \gamma Z_\ze$. Replacing $Y_\ze$ by $\gamma Y_\ze$ and $Y_\un$ by $\gamma^{-1}Y_\un$, we obtain the result that $[X_\ze,Y_\ze] = Y_\ze$, $[X_\ze,Z_\ze] = -\frac{1}{2} Z_\ze$, $[X_\ze,Y_\un]=-Y_\un$, $[Y_\ze,Y_\un]=X_\un$,	$[X_\ze,Z_\un]=\frac{1}{2} Z_\un$, $[Z_\ze,Z_\un]=-\frac{1}{2} X_\un$, $[Z_\un,Z_\un] = Y_\ze$ and $[Z_\un,Y_\un] = Z_\ze$.
\end{itemize}
\end{enumerate}

Summarily, solvable odd quadratic Lie superalgebras of dimension 6 are listed in Table \ref{table1}. Note that the Lie superalgebras $\g_{6,i}^{o}$, $2\leq i\leq 7$, are indecomposable.
\begin{table}[ht]
		\begin{tabular}{|l|l|l|}\hline
			\textbf{$\g$} & \textbf{$\g_\ze$} & Other nonzero brackets \\ \hline
			$\g_{6,0}^{o}$ & Abelian & \\ \hline
			$\g_{6,1}^{o}$ & Abelian & $[\g_\un,\g_\un] \subset \g_\ze$ \\ \hline
			$\g_{6,2}^{o}$ & $[X_\ze,Y_\ze] = Z_\ze$ & $[Y_\ze,Z_\un] = X_\un$,\hspace{0.5cm} $[X_\ze,Z_\un] = -Y_\un$ \\ \hline
			$\g_{6,3}^{o}$ & $[X_\ze,Y_\ze] = Z_\ze$ & $[Y_\ze,Z_\un] = X_\un$,\hspace{0.5cm} $[X_\ze,Z_\un] = -Y_\un$,\hspace{0.5cm} $[ Z_\un, Z_\un] = Z_\ze$ \\ \hline
			$\g_{6,4}^{o}$ & $\ [X_\ze,Y_\ze] = Y_\ze$,  & $[X_\ze,Y_\un]=-Y_\un-Z_\un$,\hspace{0.5cm} $[Y_\ze,Y_\un]=X_\un$, \\
			 & $[X_\ze,Z_\ze] = Y_\ze+Z_\ze$ & $[Z_\ze,Y_\un]=X_\un$, \hspace{0.5cm} $[X_\ze,Z_\un]=-Z_\un$, \hspace{0.5cm} $[Z_\ze,Z_\un]=X_\un$\\ \hline
			$\g_{6,5}^{o}$ & $[X_\ze,Y_\ze] = Y_\ze$ & $\g = \g^o_{4,3}\oplusp\g^o_2(\gamma)$\\ \hline
			 
			$\g_{6,6}^{o}$  & $[X_\ze,Y_\ze] = Y_\ze,$ & $[X_\ze,Y_\un]=-Y_\un,\hspace{0.5cm}[Y_\ze,Y_\un]=X_\un, \hspace{0.5cm} [X_\ze,Z_\un]=-\mu Z_\un,$\\
			 & $[X_\ze,Z_\ze] = \mu Z_\ze$ & $\ [Z_\ze,Z_\un]=\mu X_\un,\ (\mu\neq 0,\ \left|\mu\right|\leq 1)$\\ \hline
			 
			$\g_{6,7}^{o}$ & $[X_\ze,Y_\ze] = Y_\ze,$ & $[X_\ze,Y_\un]=-Y_\un,\hspace{0.5cm} [Y_\ze,Y_\un]=X_\un, \hspace{0.5cm} [X_\ze,Z_\un]=\frac{1}{2} Z_\un,$ \\
			& $[X_\ze,Z_\ze] = -\frac{1}{2}Z_\ze $& $[Z_\ze,Z_\un]=-\frac{1}{2} X_\un,\hspace{0.5cm} [Z_\un,Z_\un] = Y_\ze,\hspace{0.5cm} [Z_\un,Y_\un] = Z_\ze.$\\ \hline
			
		\end{tabular}
		\vspace{0.5cm}
		\caption{\textbf{Complex solvable six-dimensional odd quadratic Lie superalgebras}.}\label{table1}
\end{table}
\begin{aknow}
  I am very grateful to D. Arnal, R. Ushirobira and S. Benayadi for their friendly encouragements and help. 
	
	This work is financially by the Foundation for Science and Technology Project of Vietnam Ministry of Education and Training.
 
\end{aknow}

\bibliographystyle{unsrt} 

\begin{thebibliography}{14}
\bibitem{ABB10} Albuquerque H Barreiro E and Benayadi S 2010 {\it J. of Geo. and Phys.} {\bf 60} 230--50

\bibitem{BBB} Bajo I Benayadi S and Bordemann M {\it Preprint} arXiv:0712.0228v1

\bibitem{BB99} Benamor H and Benayadi S 1999 {\it Comm. in Alg.} {\bf 27} 67--88

\bibitem{Bor97} Bordemann M 1997 {\it Acta. Math. Univ. Comen.} {\bf 66} 151--201

\bibitem{Bou59} Bourbaki N 1959 {\it \'El\'ements de Math\'ematiques. Alg\`ebre, Formes sesquilin\'eaires et formes quadratiques}  Fasc. XXIV, Livre II (Hermann, Paris) p ~ 211

\bibitem{CM93} Collingwood D H and McGovern W M 1993 {\it Nilpotent Orbits in Semisimple Lie Algebras}  (Van Nostrand Reihnhold Mathematics Series, New York) p ~ 186

\bibitem{DLP12} Duong M T Le A V and Pham TD 2012 {\it East-West J. of Math.} {\bf 14} 208--18

\bibitem{DPU12} Duong M T Pinczon G and Ushirobira R 2012 {\it Alg. Rep. Theory} {\bf 15} 1163--203

\bibitem{DU10} Duong M T and Ushirobira R 2010 Jordanian double extensions of a quadratic vector space and symmetric Novikov algebras {\it Preprint} arXiv:1012.5556v1

\bibitem{DU12} Duong M T and Ushirobira R 2012 Singular quadratic Lie superalgebras {\it Preprint} arXiv:1206.5504v1 

\bibitem{FS87} Favre G and Santharoubane L 1987 {\it J. Algebra} {\bf 105} 451--64 

\bibitem{FS96} Figueroa-O'Farrill J M and Stanciu S 1996 \JMP {\bf 37} 4121

\bibitem{Kac85} Kac V 1985 {\it Infinite-dimensional Lie algebras} (Cambrigde University Press, New York) p ~ 280
   
\bibitem{MR85} Medina A and Revoy P 1985 {\it Ann. Sci. \'Ecole Norm. Sup.} {\bf 4} 553--61

\bibitem{Med85} Medina A 1985 {\it T\^ohoku Math. Journ.} {\bf 37} 405--21
  
\bibitem{PU07} Pinczon G and Ushirobira R 2007 {\it J. Lie Theory} {\bf 17} 633--68

\bibitem{Sch79} Scheunert M 1979 {\it The Theory of Lie Superalgebras}  Lecture Notes in Mathematics (Springer-Verlag, Berlin) p ~ 271

\end{thebibliography}
\section*{References}

\end{document}